\newcommand{\R}{\mathbb{R}}\newcommand{\Rn}{{\R^n}}\newcommand{\Rnn}{{\R^{n\times n}}}
\newcommand{\N}{\mathbb{N}}
\newcommand{\C}{\mathbb{C}}
\renewcommand{\S}{\mathbb{S}}\newcommand{\Sn}{\S^n}
\renewcommand{\vec}{\mathbf}
\renewcommand{\O}{\Omega}
\DeclareMathOperator{\diam}{diam}
\renewcommand{\div}{\operatorname{div}}
\DeclareMathOperator{\diver}{div}
\DeclareMathOperator{\supp}{supp}
\DeclareMathOperator{\tri}{tri}
\DeclareMathOperator{\rect}{rect}
\DeclareMathOperator{\sinc}{sinc}
\DeclareMathOperator*{\minz}{minimize}
\theoremstyle{plain}
\newtheorem{theorem}{Theorem}
\newtheorem{lemma}[theorem]{Lemma}
\newtheorem{proposition}[theorem]{Proposition}
\theoremstyle{definition}
\newenvironment{example}
  {\pushQED{\qed}\examplex}
  {\popQED\endexamplex}
\theoremstyle{remark}
\newtheorem{remark}[theorem]{Remark}
\begin{document}

% ==============================================================================
%\begin{frontmatter}
\title{From nonlocal Eringen's model to fractional elasticity}

\author{Anton Evgrafov, Jos\'e C. Bellido}

\maketitle
%\ead{aaev@mek.dtu.dk}
%
%\author[ucmladdress]{\corref{mycorrespondingauthor}}
%\cortext[mycorrespondingauthor]{Corresponding author}
%\ead{JoseCarlos.Bellido@uclm.es}
%
%
%\address[ntnuaddress]{Department of Mathematical Sciences,
%Norwegian University of Science and Technology (NTNU), N--7491 Trondheim,
%Norway}
%\address[dtuaddress]{Department of Mechanical Engineering,
%Technical University of Denmark,
%Akademivej, Building 358, DK--2800 Kgs.\ Lyngby, Denmark}
%\address[ucmladdress]{E.T.S.I.\ Industriales, Department of Mathematics,
%University of Castilla-La Mancha, 13.071-Ciudad Real, Spain}

\begin{abstract}
Eringen's model is one of the most popular theories in nonlocal elasticity.
It has been applied to many practical situations with the objective of removing
the anomalous stress concentrations around geometric shape singularities,
which appear when the local modelling is used.
Despite the great popularity of Eringen's model in mechanical engineering community,
even the most basic questions such as the existence and uniqueness of solutions have been rarely
considered in the research literature for this model.
In this work we focus on precisely these questions, proving that the model is in
general ill-posed in the case of smooth kernels, the case which appears rather
often in numerical studies.
We also consider the case of singular, non-smooth kernels, and for the
paradigmatic case of the Riesz potential we establish the well-posedness of
the model in fractional Sobolev spaces.
For such a kernel, in dimension one the model reduces to the well-known
fractional Laplacian.
Finally, we discuss possible extensions of Eringen's model to spatially heterogeneous
material distributions.

\end{abstract}

\noindent\textbf{Keywords: }
Nonlocal elasticity, Riesz potential, Nonlocal Korn's inequality, Eringen's model \newline
\noindent\textbf{MSC[2010]: } 35Q74, 35R09, 74Gxx, 74E05

%   35J47 \sep  74A05 ???

%   35R09  	Integro-partial differential equations
%   35Q74  	PDEs in connection with mechanics of deformable solids
%   35R11  	Fractional partial differential equations
%   74G25  	Global existence of solutions
%   74G30  	Uniqueness of solutions
%   74G40  	Regularity of solutions
%   74E05  	Inhomogeneity
%\end{keyword}

%\end{frontmatter}
% ==============================================================================

% !TEX root = EringenExistence.tex

\section{Introduction}
\label{sec:intro}

Nonlocal elasticity theories have been devised with the objective of taking
into account long range internal interaction forces between particles.
In this way these theories aim at alleviating various singularity problems that
arise in the local theories, such as for example stress singularities in the
vicinity of cracks.
Beginning of the nonlocal theory of elasticity goes back to the pioneering work
of Kr{\"o}ner~\cite{kroner}.
In this early paper the classical linear local Lam{\'e} model is modified by
adding a nonlocal term in the form of an integral operator acting on the
displacements.

Perhaps the most popular and extended theory of nonlocal elasticity is the
one due to Eringen~\cite{eringen}.
In Eringen's model a nonlocal stress tensor, computed as an average of the
local stress tensor,  is introduced.
The equation of motion is then expressed in terms of the non-local stress
tensor.
If the elastic tensor is constant throughout the domain this theory can be
equivalently expressed by replacing the local strain in the constitutive
relation for the classical elasticity by a nonlocal one, obtained by
averaging~\cite{polizzotto01}.

These integral theories are called strongly nonlocal theories since
the stress at a point in the domain depends, through averaging, on the stress
at points around it.
Another class of nonlocal theories of elasticity are the so-called weakly
nonlocal theories, with the gradient theory of Aifantis~\cite{aifantis11}
arguably being the most well-known one.
In this theory the stress is expressed as a function of the strain and its
Laplacian at the same point, inducing a smoothing or regularization of strains.
This model results in a boundary value problem (BVP) associated with a fourth
order differential operator acting on the
displacements~\cite{altan-aifantis, ru-aifantis}.

In this paper we focus on integral theories, and particularly on the classical
Eringen's model of linear nonlocal elasticity.
This extremely popular model has been utilized in a variety of mechanical
applications.
Recently, it has attracted revitalized interest owing to its applicability to
the modelling of nanobeams and nanobars (see~\cite{romano} and the references
therein).
In spite of such an interest in this model from the point of view of
applications, mathematical studies of it are very scarce.
The only reference devoted to the question of existence of solutions for
the Eringen's model of linear elasticity is~\cite{altan89}.
Unfortunately the proof in the cited paper is neither complete nor correct,
as we show in Section~\ref{sec:noncoerce}.
In this work we rigorously address the question of existence of solutions to the Eringen's model.
We provide both explicit theoretical results and numerical examples demonstrating
that the model in its weak form is not necessarily coercive under the original
hypothesis of smoothness of the integral kernel, which is mathematically and
mechanically unacceptable.
A direct consequence of this fact is the highly unstable behavior of the
discretized solutions with respect to the mesh refinement.

We emphasize that on any fixed mesh the discretized non-local Eringen's model
admits solutions, which we believe explains numerous successful numerical
simulations based on this model.
Furthermore, although there are no rigorous studies to the best of authors'
knowledge, it is plausible to expect that this model converges to the local
model of linear elasticity when the long range interaction potential is scaled
appropriately (that is, when the potential converges to \(\delta\)-function in
some sense).
Rigorous limit derivations in the sense of \(\Gamma\)-convergence for other
related nonlocal models are for instance presented in~\cite{belmorped, mengesha, ponce}.
Therefore, one may expect that when the scaling is such that the non-local model
is close enough to a local model, for a given mesh size, the
discrete solutions are close to those given by the local model, while stresses
are smoother owing to the smoothing effect of the integral convolution that is
built in into the model.

In view of the ill-posedness of Eringen's model with smooth kernels it is
natural to look for possible remedies.
One such possibility is the so-called Eringen's mixture model, which has
been proposed by Eringen himself~\cite{eringen}.
This model has been recently revitalized in connection with applications in
nano-scale  modelling.
This model is, roughly speaking, a convex combination of the classical local
elasticity model and the Eringen's non-local integral model.
Another way of thinking about it is that this model can be formally obtained from the
non-local Eringen's model by adding a \(\delta\)-function to the non-local
interaction kernel.
Consequently, it should not come as a surprise that this model remains close
to the local elasticity model and inherits many theoretical properties from it.
In particular, it is elementary to see that this model is well-posed, yet for
the sake of completeness we include an existence result which is valid for
general positive definite kernels, including smooth ones.

An approach which in our opinion is much closer in spirit to the original idea
behind the non-local model of Eringen is to consider singular non-local interaction kernels.
More specifically, we will focus on the Riesz potential kernel, for which
we are able to show well-posedness, that is the existence, uniqueness, and stability
of solutions, for the nonlocal Eringen's model in fractional Sobolev spaces
\(H^s_0\), \(0\le s <1\).
Riesz potentials arise, for example, in the definition of the fractional Laplacian,
see~\cite{stein}.
Fractional Laplacian is one of the the most paradigmatic differential operators
in nonlocal modeling, with applications in many applied contexts (see the
survey~\cite{vazquez} and the references therein).
Keeping in mind that in one spatial dimension Eringen's model with Riesz potential
kernel reduces to the fractional Laplacian, we provide a very natural connection
between the non-local Eringen's model and fractional partial differential equations
in the context of linear elasticity.
This development requires new ideas and tools, such as for instance a nonlocal
version of Korn's inequality.

Finally we consider the extension of this nonlocal integral model with the Riesz
potential kernel to the case of heterogeneous materials, that is, the case of
a spatially varying stiffness tensor.
Such as an extension, being completely straightforward in the local case,
presents serious difficulties in the nonlocal situation, as far as symmetry and strict
positive definiteness of the problem are concerned.
We discuss these difficulties and propose an extension of Eringen's model with
Riesz potential to this general situation, in the sense that it coincides with
the Eringen's model for a constant stiffness tensor.
We also establish the existence and uniqueness of solutions for the heterogeneous
model.

The outline of the paper is the following: in Section~\ref{sec:nlmodel} we introduce the Eringen's
nonlocal integral model.
Section~\ref{sec:noncoerce} is devoted to the discussion of this model for a constant stiffness tensor.
First, in Subsections~\ref{subsect:noexist_l2} and~\ref{subsect:noexist_h1}, we demonstrate the ill-posedness
for smooth kernels in \(L^2\) and more general square integrable kernels in \(H^1_0\) owing to the lack of
coercivity of the bilinear form in the weak formulation of the problem.
Explicit theoretical results and numerical examples corroborating those results are given.
Subsection~\ref{subsect:exist_l2} is devoted to a simple example of a kernel for which existence of solutions holds in \(L^2\).
In~Subsection~\ref{subsec:nlKorn}, prior to proving existence of solutions for the Riesz potential kernel in Subsection~\ref{subsec:Riesz}, we prove a nonlocal Korn's inequality and coercivity and boundedness of the problem in its natural
functional space, which coincides with a fractional Sobolev space for the Riesz potential kernel.
Subsection~\ref{subsec:nlmix} is devoted to the Eringen's mixture model including a general existence result.
Finally, in Section~\ref{sec:heterog} we deal with extending the model to the heterogeneous material case.

% In Section~\ref{sec:nlmix} we consider the so-called local-nonlocal mixture model,
% already proposed by Eringen, as a way to overcome the non-coercivity of the
% purely nonlocal model.
% In this model the strain-stress relation is a convex combination of a local
% phase and a nonlocal integral one.
% We present a extended version of the model designed to deal with heterogeneous
% materials, that is the situations in which the elastic moduli depend on the
% spatial variable.
% For this novel model useful in many applications we rigorously establish
% existence and uniqueness of solutions.

% !TEX root = EringenExistence.tex

\section{Nonlocal elasticity model}
\label{sec:nlmodel}

We consider a version of the Eringen's nonlocal elasticity model given
in~\cite{polizzotto01} (see also~\cite{eringen} and the references therein).
Let \(\O\subset\R^n\), \(n=1, 2\), or \(3\) be an open bounded domain with
Lipschitz boundary \(\Gamma = \partial \Omega\).
Let further \(\R_+ \ni d \mapsto \tilde{A}(d)\) be a function describing the
non-local interaction between the points in the model at a distance
\(d\ge 0\) from each other.
It will be convenient to evenly extend \(\tilde{A}\) onto the whole real line,
that is, we put \(\tilde{A}(d) = \tilde{A}(-d)\), \(\forall d\le 0\).
We define the kernel \(A: \R^n\times\R^n \to \R\) as
\(A(x,x') = \tilde{A}(|x-x'|)\).
We will assume that \(A \in L^1(\Omega\times\Omega)\), and that it is
a \emph{strictly} positive definite kernel:
\begin{equation}
  \label{pd}
  \int_{\O} \int_{\O} A(x,x') \phi(x) \phi(x') \,\mathrm{d}x'\,\mathrm{d}x > 0,
  \quad \forall \phi \in C^\infty_c(\Omega) \setminus \{0\},
\end{equation}
where the inequality above is known as Mercer's condition.
Often \(\tilde{A}\) is taken to be a non-negative, smooth function with
small compact support -- which results in \(A\) being a typical convolution
kernel for mollifying (see~\cite{brezis}).

The rest of this section should be understood as a preliminary informal
discussion where we do  not pay attention to the smoothness or integrability
requirements, which individual functions and the function spaces they are
contained in should satisfy.
The precise details will be added later on.

The nonlocal elasticity model that we consider can be stated as follows:
find the displacements \(u : \O \to \Rn\), the local strains
\(\varepsilon: \O \to \Sn \),
and the nonlocal stresses \(\sigma :\O \to \Sn\), where \(\Sn\) is
the set of \(n\times n\) symmetric matrices, such that
\begin{equation}\label{ne}
\left\{\begin{alignedat}{-1}
  -\diver(\sigma)&=f, && \text{in \(\O\)},\\
  \varepsilon &=\frac{1}{2}[\nabla u+(\nabla u)^\mathrm{T}], && \text{in \(\O\)},\\
  \sigma &=  \int_{\O} A(x,x')C\varepsilon(x')\,\mathrm{d}x', && \text{in \(\O\)},\\
\sigma \cdot \hat{n}&=g, && \text{on \(\Gamma_N\)},\\
u&=\bar{u}, &&\text{on \(\Gamma_D\)},\\
\end{alignedat}\right.
\end{equation}
where \(\Gamma_D,\Gamma_N \subset \Gamma\), \(\Gamma_D\cap\Gamma_N=\emptyset\),
\(\overline{\Gamma_D\cup\Gamma_N} = \Gamma\) are the Dirichlet and Neumann parts
of the boundary, respectively; \(\hat{n}\) is the outwards facing unit normal
for \(\O\) on \(\Gamma\); and \(C\) is the fourth-order stiffness tensor
with the usual symmetries.\footnote{%
Unless it is explicitly stated otherwise, we assume that the stiffness tensor
\(C\) is constant in \(\O\).}
In~\eqref{ne}, the equations from top to bottom are the equilibrium, kinematic
compatibility, and non-local constitutive equations,
and traction (Neumann) and displacement (Dirichlet) boundary conditions,
respectively.
The  boundary conditions are in turn defined by the traction forces
\(g : \Gamma_N \to \R^n\) and the prescribed displacements
\(\bar{u} : \Gamma_D \to \R^n\).
The equilibrium equations are written with respect to the applied external
volumetric forces \(f : \O \to \R^n\).

As in the case of local elasticity, we assume that the
stiffness tensor is bounded and positive definite, that is, that there exist
constants \(\overline{C}\ge \underline{C} > 0\) such that
\begin{equation}
  \label{eq:localcoercivity}
\underline{C} \epsilon: \epsilon \le
C\epsilon: \epsilon \le
\overline{C} \epsilon: \epsilon,\quad
\text{for any \(\epsilon\in \Sn\)},
\end{equation}
where \(:\) stands for the Frobenius inner product in \(\Rnn\), that is
for \(\alpha=(\alpha_{km})_{1\le k,m\le n}\) and
\(\beta=(\beta_{km})_{1\le k,m\le n}\) we put
\(\alpha:\beta=\sum_{k,m=1}^n \alpha_{km}\beta_{km}\).

%For isotropic materials the stiffness tensor can be written as
%\begin{equation*}
%  C_{ijkh}=\lambda \delta_{ij} \delta_{kh}
%  +\mu(\delta_{ik}\delta_{ih}+\delta_{ih}\delta_{jk}),
%\end{equation*}
%where \(\lambda,\mu>0\) are the Lam{\'e} constants.

The weak formulation of~\eqref{ne} is obtained in the usual manner.
Namely we substitute the kinematics and the constitutive equations into the
equilibrium equation, multiply the latter with a test function
\( v \in V = \{\, \tilde{v} : \O \to \Rn \mid \tilde{v} = 0
\text{\ on \(\Gamma_D\)} \,\}\) and integrate by parts.
As a result we obtain the problem of finding
\( u\in u_0 + V\), where \( u_0 : \O \to \Rn \) is some fixed
function satisfying the Dirichlet boundary conditions \(u_0=\bar{u}\) on
\(\Gamma_D\), and such that
\begin{equation}\label{wf}
a(u,v)=\ell(v), \quad \forall v\in V,
\end{equation}
where
\begin{equation}\label{eq:al}
  \begin{aligned}
    a(u,v)&=\int_\O\int_{ \O}
    A(x,x') C \varepsilon_u(x): \varepsilon_v(x')\,\mathrm{d}x'\,\mathrm{d}x,
    \quad \text{and}\\
    \ell(v)&=\int_\O f(x)\cdot v(x)\,\mathrm{d}x
        +\int_{\Gamma_N} g(x)\cdot v(x)\,\mathrm{d}x,
  \end{aligned}
\end{equation}
and finally
\(\varepsilon_u = [\nabla u+(\nabla u)^\mathrm{T}]/2\) and
\(\varepsilon_v = [\nabla v+(\nabla v)^\mathrm{T}]/2\).

%\(V\) is a linear sub-space of the Hilbert space \(H^1(\O;\R^n)\),
%and, by Poincar\'e's inequality (see \cite{leoni}, for instance), the seminorm
%\[\|v\|_V= \|\nabla v\|_2\]
%is equivalent to the \(H^1\)-norm on \(V\).

% !TEX root = EringenExistence.tex

\section{Discussion of Eringen's model}
\label{sec:noncoerce}

To the best of the authors' knowledge, the only study dedicated to the
question of existence and uniqueness of solutions to the nonlocal
Eringen's model~\eqref{ne} is~\cite{altan89}.
This study focuses on the case of homogeneous Dirichlet boundary conditions,
that is, \(\Gamma_D = \Gamma\) and \(\bar{u} = 0\).
We will now briefly recall the approach taken in~\cite{altan89}.

In view of~\eqref{pd}, the symmetric bilinear expression
\((u,v)_A=\int_\O\int_\O
A(x,x') \nabla u(x):\nabla v(x')\,\mathrm{d}x'\,\mathrm{d}x,\) defines
an inner product on \(C_c^\infty(\O; \R^n)\), and induces a norm \(\|\cdot\|_A\).
Let \(V_A\) be the completion of \(C_c^\infty(\O; \R^n)\) with respect to this
inner product.
The author of~\cite{altan89} rigorously verifies that the symmetric bilinear
form \(a(\cdot,\cdot)\) defined by~\eqref{eq:al} is bounded and coercive on
\((C_c^\infty(\O; \R^n), \|\cdot\|_A)\), and therefore also on the Hilbert space
\(V_A\).
(We note that the coercivity, in addition to~\eqref{pd}
and~\eqref{eq:localcoercivity}, relies on a non-local version of Korn's
inequality established in the cited work.\footnote{%
A general reference on existence and uniqueness of solutions in classical, local
linear elasticity, including the classical Korn's inequality
is~\cite{marsden-hughes}.})
The author of~\cite{altan89} then immediately proceeds to applying
Lax--Milgram's lemma to the problem~\eqref{wf} considered on the Hilbert space
\(V_A\), where the linear functional \(\ell\) in the right hand side is defined
by the function \(f \in L^2(\O;\Rn)\), and concludes that this problem always
posesses a unique solition \(u \in V_A\).
To expose the flaw in the argument, let us recall the Lax-Milgram's lemma
(see, for example, \cite{brezis}).

\begin{theorem}[Lax-Milgram's Lemma]\label{thm:LM}
  Let \(H\) be a Hilbert space, \(a: H\times H \to \R\) be a coercive and
  bounded bilinear form, and \(\ell : H \to \R\) be a linear bounded
  functional.
  Then there exists a unique \(u\in H\) such that
  \begin{equation*}
    a(u,v)=\ell(v), \quad \text{for all \(v\in H\)}.
  \end{equation*}
  This solution satisfies the stability estimate \(\|u\|_H \leq \alpha^{-1}\|\ell\|_{H'}\),
  where \(\alpha\) is the coercivity constant corresponding to \(a\).
  Moreover, if \(a\) is symmetric, then \(u\) is characterized as the
  unique solution of the following unconstrained optimization problem:
  \begin{equation*}
    \minz_{v \in H} I(v) := \frac{1}{2}a(v,v)-\ell(v).
  \end{equation*}
\end{theorem}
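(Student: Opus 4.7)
The plan is to reduce the variational equation to an operator equation in $H$ via the Riesz representation theorem, then solve the latter by a contraction mapping argument. For each fixed $u \in H$ the map $v \mapsto a(u,v)$ is a bounded linear functional on $H$, so Riesz's theorem yields a unique $Au \in H$ with $a(u,v) = (Au,v)_H$ for every $v \in H$; linearity of $a$ in its first argument together with the bound $|a(u,v)| \le M\|u\|_H\|v\|_H$ promote $u \mapsto Au$ to a bounded linear operator satisfying $\|A\|\le M$. In the same way, Riesz produces a unique $F \in H$ representing $\ell$, with $\|F\|_H = \|\ell\|_{H'}$. The problem~\eqref{wf} is then equivalent to solving $Au = F$ in $H$.

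To invert $A$, the plan is to apply Banach's fixed point theorem to $T_\rho u := u - \rho(Au - F)$ for a well-chosen $\rho > 0$. Expanding $\|T_\rho u - T_\rho w\|_H^2$ and combining coercivity $(A(u-w),u-w)_H \ge \alpha\|u-w\|_H^2$ with the bound $\|A(u-w)\|_H \le M\|u-w\|_H$ yields $\|T_\rho u - T_\rho w\|_H^2 \le (1 - 2\rho\alpha + \rho^2 M^2)\|u-w\|_H^2$, which is a strict contraction for any $\rho \in (0, 2\alpha/M^2)$. The unique fixed point is the sought $u$. The stability estimate then falls out of testing $a(u,u) = \ell(u)$ and applying coercivity: $\alpha\|u\|_H^2 \le \ell(u) \le \|\ell\|_{H'}\|u\|_H$.

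For the symmetric case, the coercive symmetric form $a$ defines an inner product on $H$ equivalent to $(\cdot,\cdot)_H$, so $I(v) = \tfrac{1}{2} a(v,v) - \ell(v)$ is strictly convex. The unique $u$ from the first part satisfies the first-order condition $a(u,v) - \ell(v) = 0$ for all $v \in H$, and strict convexity upgrades this critical point to the unique global minimizer of $I$. The main obstacle throughout is the construction of $A^{-1}$: coercivity alone gives injectivity and closed range for $A$, but surjectivity genuinely requires the Hilbert-space machinery invoked above (Riesz representation, together with either the fixed-point iteration or a Galerkin approximation combined with weak compactness of bounded sets).
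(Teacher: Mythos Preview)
Your proof is correct and follows the standard textbook argument (Riesz representation to reduce to an operator equation, then Banach's fixed point theorem with the auxiliary map $T_\rho$). However, note that the paper does not actually supply its own proof of this statement: Theorem~\ref{thm:LM} is stated as a classical result with a reference to~\cite{brezis}, and is invoked only as a tool in the subsequent sections. So there is nothing to compare against beyond observing that your argument is essentially the one found in the cited reference.
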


It is worth pointing out that in our case  the quadratic functional
\(I(v) := a(v,v)/2 - \ell(v)\) represents the strain energy of the
system~\cite{polizzotto01}.

At this point the reader has noticed that the last condition needed for the
successful application of Lax-Milgram's lemma, that is the boundedness of the
linear functional \(\ell\), is left unchecked, thus voiding the proof
in~\cite{altan89}.
Is this functional continuous on \(V_A\)?
The answer to this question is: it depends on the kernel \(A\).
Let us elaborate on this answer with the following discussion.

Given \(f \in L^2(\Omega)\), we would like to estimate from above the following
quantity:
\begin{equation}\label{eq:func}
  \|f\|_{V_A'} = \sup_{v \in V_A\setminus\{0\}} \frac{|f(v)|}{\|v\|_{A}}.
\end{equation}
From Cauchy-Schwartz's inequality we know that for any \(f\in L^2(\O;\Rn)\)
and any \(v \in C_c^\infty(\O;\Rn)\) we have the estimate
\(|f(v)| \le \|f\|_{L^2(\O;\Rn)} \|v\|_{L^2(\O;\Rn)}\), with equality when
\(f=\beta u\) for some \(\beta \in \R\).
Thus if \( \sup_{v \in C_c^\infty(\O;\R^n)\setminus \{0\}}
\|v\|_{L^2(\O;\Rn)}/\|v\|_{A}\) is bounded from above (that is, when
\(V_A\) is continuously embedded into \(L^2(\O;\Rn)\)), the quantity
in~\eqref{eq:func} is bounded and Theorem~\ref{thm:LM} is indeed applicable.
If, on the other hand, we can construct a sequence
\(v_k \in C_c^\infty(\O;\R^n)\setminus \{0\}\) such that
\( \lim_{k\to\infty} (v_k,v_k)_A/(v_k,v_k)_{L^2(\O;\Rn)} =0\), then
it is also quite likely that for some \(f \in L^2(\O;\Rn)\) the resulting
\(\ell\) is unbounded on \(V_A\), therefore Theorem~\ref{thm:LM} does not apply,
and in fact there may be no solutions to the problem~\eqref{wf}.

We will now demonstrate that either alternative is possible.

%------------------------------------------------------------------------------
\subsection{Ill-posedness of~\eqref{wf} in \(L^2(\O;\Rn)\)
for smooth kernels \(\tilde{A}(|\cdot|)\)}
\label{subsect:noexist_l2}

\begin{proposition}\label{prop:compact_embedding_l2}
Assume that the function \(\R^n \ni d \mapsto \tilde{A}(|d|)\) is twice
weakly differentiable on \(\Omega-\Omega\) with
\(\Omega \times \Omega \ni (x,x') \mapsto \Delta\tilde{A}(|x-x'|)
\in L^2(\Omega \times \Omega)\).
Then \(L^2(\O;\Rn)\) is compactly embedded in \(V_A\).
\end{proposition}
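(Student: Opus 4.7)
The plan is to move both gradients off $v$ in the definition of $\|v\|_A^2$ via two integrations by parts, reducing the $V_A$-norm to a quadratic form driven by the kernel $-\Delta\tilde A$ acting on $v$ itself; the $L^2(\O\times\O)$-assumption on this kernel will then make the underlying integral operator Hilbert--Schmidt, and compactness of the embedding will follow from compactness of that operator.

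First I would take $v\in C_c^\infty(\O;\Rn)$ and, for each pair of indices $i,k$, integrate $\int_\O\int_\O A(x,x')\,\partial_i v_k(x)\,\partial_i v_k(x')\,\mathrm{d}x'\,\mathrm{d}x$ by parts successively in $x$ and in $x'$. Compact support of $v$ kills all boundary terms, and the identity $\partial_{x'_i}\tilde A(|x-x'|) = -\partial_{x_i}\tilde A(|x-x'|)$ combined with summation over $i$ produces the Laplacian of $\tilde A$, yielding
\[
\|v\|_A^2 \;=\; -\int_\O\int_\O \Delta\tilde A(|x-x'|)\, v(x)\cdot v(x')\,\mathrm{d}x'\,\mathrm{d}x.
\]
Define $T:L^2(\O;\Rn)\to L^2(\O;\Rn)$ componentwise by $(Tv)_k(x) = -\int_\O \Delta\tilde A(|x-x'|)\,v_k(x')\,\mathrm{d}x'$; by hypothesis its scalar kernel lies in $L^2(\O\times\O)$, so $T$ is Hilbert--Schmidt and in particular compact. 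The previous display reads $\|v\|_A^2 = \langle v, Tv\rangle_{L^2}$, whence $\|v\|_A \le \|T\|_{\mathrm{op}}^{1/2}\|v\|_{L^2}$. By density of $C_c^\infty$ in $L^2$, the inclusion extends to a continuous embedding $L^2(\O;\Rn)\hookrightarrow V_A$ and the identity persists for every $v\in L^2(\O;\Rn)$.

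For the compactness assertion, let $\{v_m\}$ be bounded in $L^2(\O;\Rn)$. By reflexivity, pass to a subsequence with $v_m\rightharpoonup v$ in $L^2$; compactness of $T$ gives $T(v_m - v)\to 0$ strongly in $L^2$, and
\[
\|v_m - v\|_A^2 \;=\; \langle v_m - v,\, T(v_m - v)\rangle_{L^2} \;\le\; \|v_m - v\|_{L^2}\,\|T(v_m - v)\|_{L^2} \;\longrightarrow\; 0,
\]
since $\{v_m - v\}$ is bounded in $L^2$. This is the desired compact embedding.

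The main technical point I expect to wrestle with is the first step: the integration-by-parts formula must be justified under mere weak twofold differentiability of $\tilde A$. The cleanest remedy is to first mollify $\tilde A$ to a smooth kernel, perform the computation in the smooth setting, and then pass to the limit using $L^2(\O\times\O)$-convergence of the mollified Laplacians to $\Delta\tilde A(|x-x'|)$.
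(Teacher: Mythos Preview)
Your proof is correct and follows essentially the same route as the paper: integrate by parts twice to obtain $\|v\|_A^2 = -\int_\O\int_\O \Delta\tilde A(|x-x'|)\,v(x)\cdot v(x')\,\mathrm{d}x'\,\mathrm{d}x$, then use that the resulting integral operator is Hilbert--Schmidt (hence compact) to deduce first the continuous embedding and then the compactness via weak-to-strong continuity. The only cosmetic differences are that the paper bounds $\|v\|_A^2$ by $\|K\|_{L^2(\O\times\O)}\|v\|_{L^2}^2$ rather than by the operator norm, and it phrases the compactness step as complete continuity; your concluding remark about mollifying to justify integration by parts under weak differentiability is a fair caveat that the paper does not spell out.
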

\begin{proof}
Under these assumptions we can use integration by parts to rewrite the
inner product \((\cdot,\cdot)_A\) on \(C_c^\infty(\Omega;\Rn)\) as follows:
\begin{equation}
  \label{eq:laplace1}
  \begin{aligned}
    (u,v)_A &= \sum_{k,m=1}^n \int_\O\int_\O \tilde{A}(|x-x'|)
    \partial_{x_m} u_k(x) \partial_{x'_m} v_k(x')\,\mathrm{d}x'\,\mathrm{d}x
    =
    \sum_{k,m=1}^n \int_\O\int_\O \partial_{x_m}\partial_{x'_m}\tilde{A}(|x-x'|)
    u_k(x) v_k(x')\,\mathrm{d}x'\,\mathrm{d}x
    \\&=
    -\int_\O\int_\O \Delta\tilde{A}(|x-x'|)
    u(x)\cdot v(x')\,\mathrm{d}x'\,\mathrm{d}x,
  \end{aligned}
\end{equation}
where the boundary terms do not appear because \(u,v \in C_c^\infty(\O;\Rn)\).
Let us put \(K(x,x') = -\Delta\tilde{A}(|x-x'|)\).
From~\eqref{eq:laplace1} we immediately conclude that
\(\forall u \in C_c^\infty(\O;\Rn)\) we have the inequality
\(\|u\|_A^2 \le \|K\|_{L^2(\O\times\O;\R)}
\|u\|^2_{L^2(\O;\Rn)}\),
which implies continuous embedding of
\(L^2(\O;\Rn)\) into \(V_A\) owing to the density of
\(C_c^\infty(\O;\Rn)\) in both spaces.

To show that this embedding is compact it is sufficient to show that
if a sequence \(u_k \in C_c^\infty(\O;\Rn)\) converges to \(0\) weakly in
\(L^2(\O;\Rn)\) then \(\|u_k\|_A \to 0\).
However, this also follows from~\eqref{eq:laplace1} owing to the fact
that the operator \(L^2(\O;\Rn) \ni u \mapsto \int_\O K(x,x')u(x')\,\mathrm{d}x'
\in L^2(\O;\Rn)\) is compact, hence also a completely continuous operator
(as any Hilbert--Schmidt integral operator) \cite{brezis}.
\end{proof}

Proposition~\ref{prop:compact_embedding_l2} implies that
\(V_A\) cannot be continuously embedded into \(L^2(\O;\Rn)\).\footnote{%
Indeed, if the embedding was continuous, the identity operator
\(i: L^2(\O;\Rn)\to L^2(\O;\Rn)\) would be compact as a composition
\(i = i_{V_A \to L^2(\O;\Rn)} \circ i_{L^2(\O;\Rn)\to V_A}\) of
a compact and a continuous embedding operators, which is impossible in
infinite-dimensional spaces.}
In view of the previous discussion, in this case we cannot guarantee that
the linear functional in the right hand side of~\eqref{wf} is bounded and
therefore also existence of solutions to~\eqref{wf}.
More generally, equation~\eqref{eq:laplace1} shows the equivalence
between~\eqref{wf} and a Fredholm integral equation of the first
kind with kernel \(K(x,x') = -\Delta\tilde{A}(|x-x'|)\).
Fredholm equation of the first kind is a canonical ill-posed problem~\cite{pog}.

Let us illustrate the situation with the following one-dimensional example.

\begin{figure}
  \centering
  \includegraphics[width=0.45\columnwidth]{./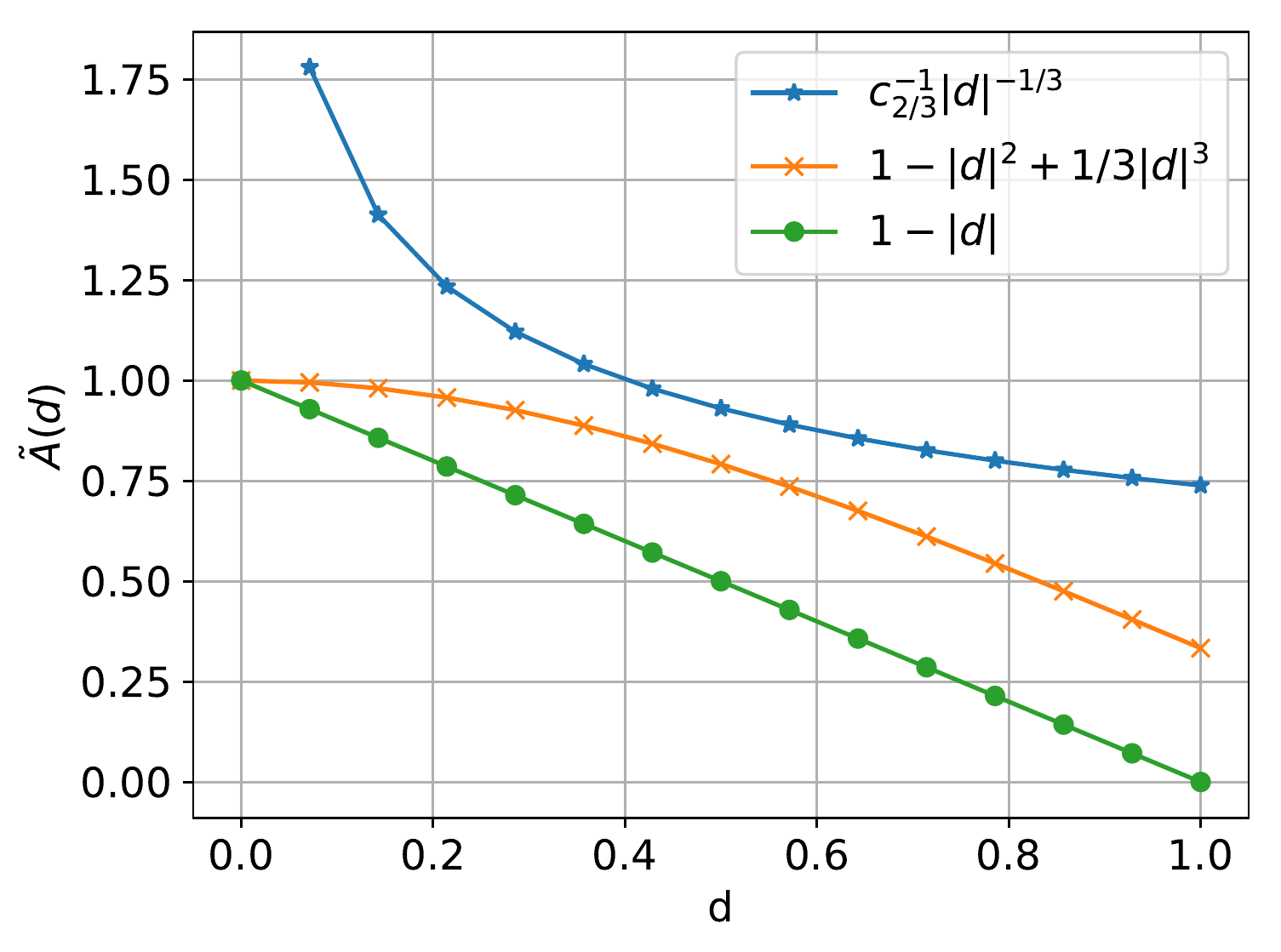}
  \caption{Kernel-generating functions \(\tilde{A}\),
  featuring in the examples.}
  \label{fig:1}
\end{figure}

\begin{example}\label{ex:noexist}
  Let \(n=1\), \(\O=(0,1)\), \(C=1\), \(f = 1\),
  and \(\tilde{A}(d) = 1-d^2 + \tfrac{1}{3} d^3\),
  see Figure~\ref{fig:1}.
  Then \(-\Delta \tilde{A}(|d|) = 2(1-|d|)
  = 2\tri(d)\) on \(\O-\O = (-1,1)\),
  where \(\tri\) is the \emph{triangle function}.

  We will utilize the Fourier transform
  \(\mathcal{F}\{\phi\}(\xi) =
  \int_\R \phi(x)\exp(-2\pi i x\xi)\,\mathrm{d}x\),
  where we will implicitly extend all functions by \(0\) outside of
  their domain of definition.
  Since \(\mathcal{F}: L^2(\R) \to L^2(\R;\C)\) is unitary, for any
  \(u,v \in C_c^\infty(\Omega)\) we can write
  \begin{equation*}
    \begin{aligned}
      (u,v)_A &=
      \int_\R \int_\R \tilde{A}(|x-x'|) \nabla u(x) \cdot \nabla v(x')
      \,\mathrm{d}x'\,\mathrm{d}x
      =
      (\nabla u,\tilde{A}(|\cdot|) * \nabla v)_{L^2(\R;\R)}
      =
      (\mathcal{F}\{\nabla u\},\mathcal{F}\{\tilde{A}(|\cdot|) * \nabla v\})_{L^2(\R;\C)}
      \\ &=
      (2\pi i \xi \mathcal{F}\{u\},
       2\pi i \xi \mathcal{F}\{\tilde{A}(|\cdot|)\}\mathcal{F}\{v\})_{L^2(\R;\C)}
      =
       (\mathcal{F}\{u\},
        4\pi^2|\xi|^2 \mathcal{F}\{\tilde{A}(|\cdot|)\}\mathcal{F}\{v\})_{L^2(\R;\C)}
      \\&=
        (\mathcal{F}\{u\},
         \mathcal{F}\{-\Delta \tilde{A}(|\cdot|)\}\mathcal{F}\{v\})_{L^2(\R;\C)}
      =
         2(\mathcal{F}\{u\},
          \sinc^2(\xi)\mathcal{F}\{v\})_{L^2(\R;\C)},
    \end{aligned}
  \end{equation*}
  where \(\sinc(x) = \sin(\pi x)/(\pi x)\), and we have used the standard
  properties of Fourier transform (convolution theorem, transform of the
  derivatives).
  Note that the derivation above can be used as an alternative way of
  arriving at~\eqref{eq:laplace1}.
  This simple calculation shows that integral operator with the kernel
  \(-\Delta \tilde{A}(|x-x'|)\) is an almost everywhere positive \emph{Fourier
  multiplier} \(2\sinc^2(\xi)\),
  and as such is strictly positive definite.
  Consequently \((\cdot,\cdot)_A\) is indeed an inner product on
  \(C_c^\infty(\Omega)\).

  We can do a similar calculation with the right hand side of~\eqref{wf}:
  \begin{equation*}
    \begin{aligned}
      (f,v)_{L^2(\O;\R)} &=
      (\mathcal{F}\{\rect(\cdot-0.5)\},\mathcal{F}\{v\})_{L^2(\R;\C)}
      =
      (\exp(-\pi i \xi)\sinc(\xi), \mathcal{F}\{v\})_{L^2(\R;\C)},
    \end{aligned}
  \end{equation*}
  where \(\rect(\cdot)\) is the characteristic function of the interval
  \((-0.5,0.5)\).
  As a result, the solution \(u\) should be equal to
  \(\tfrac{\pi}{2} \mathcal{F}^{-1}\{\exp(-\pi i \xi) \xi/\sin(\pi\xi)\}\).
  Clearly, the function in the curly brackets is not in \(L^2(\R;\C)\),
  and therefore the Eringen problem with this kernel
  does not admit a solution in \(L^2(\O)\).

  Let us now perform a conforming finite element simulation of~\eqref{wf}
  with this kernel.
  We subdivide \(\O\) into \(N\) uniform subintervals
  \(I_k\), \(k=1,\dots,N\) of length \(h=1/N\) and put
  \begin{equation*}
    V_{A,h,p} = \{\, \phi \in C^0(\O) \mid \phi(0)=\phi(1)=0, \phi|_{I_k}
    \text{\ is a polynomial of degree \(\le p\)}, k=1,\dots,N\,\}.%
    \footnote{%
    Note that \(V_{A,h,p} \subset H^1_0(\O)\) is a subspace of \(V_a\),
    see Proposition~\ref{prop:compact_embedding_h1}.%
    }
  \end{equation*}
  Let \(e_k\), \(k=1,\dots,\tilde{N}(N,p)\) be a basis in \(V_{A,h,p}\).
  We compute the matrices \(K\) and \(M\) with elements
  \(K_{k,m} = (e_k,e_m)_A=a(e_k,e_m)\) and \(M_{k,m}=(e_k,e_m)_{L^2(\O)}\).
  The smallest eigenvalue \(\lambda_{h,p}\) corresponding to the generalized
  eigenvalue problem \(K \vec{v} = \lambda M \vec{v}\) admits the variational
  characterization
  \begin{equation}\label{eq:lambda_hp}
    \lambda_{h,p} = \inf_{v_h \in V_{A,h,p}\setminus\{0\}}
    \frac{(v_h,v_h)_A}{(v_h,v_h)_{L^2(\O;\R)}}.
  \end{equation}
  The behaviour of this eigenvalue for a range of \(h=N^{-1}\) and \(p=1,2\)
  is shown in Figure~\ref{fig:2}~(a).
  This figure illustrates two already established facts:
  1. For each \(h>0\) the matrix \(K\) is symmetric and positive definite,
  since \((\cdot,\cdot)_A\) is, and therefore
  the discretization of~\eqref{wf} admits a unique solution for an arbitrary
  \(f\in L^2(\Omega)\);
  2. For small \(h\), \(\lambda_{h,p} = O(h^2)\) and therefore ``in the limit''
  the ratio
  \(\inf_{v \in V_{A}\setminus\{0\}} {(v,v)_A}/{(v,v)_{L^2(\O)}}=0\),
  in accordance with the compact embedding established in
  Proposition~\ref{prop:compact_embedding_l2}.

  \begin{figure}
    \centering
    \begin{tabular}{cc}
    \includegraphics[width=0.45\columnwidth]{./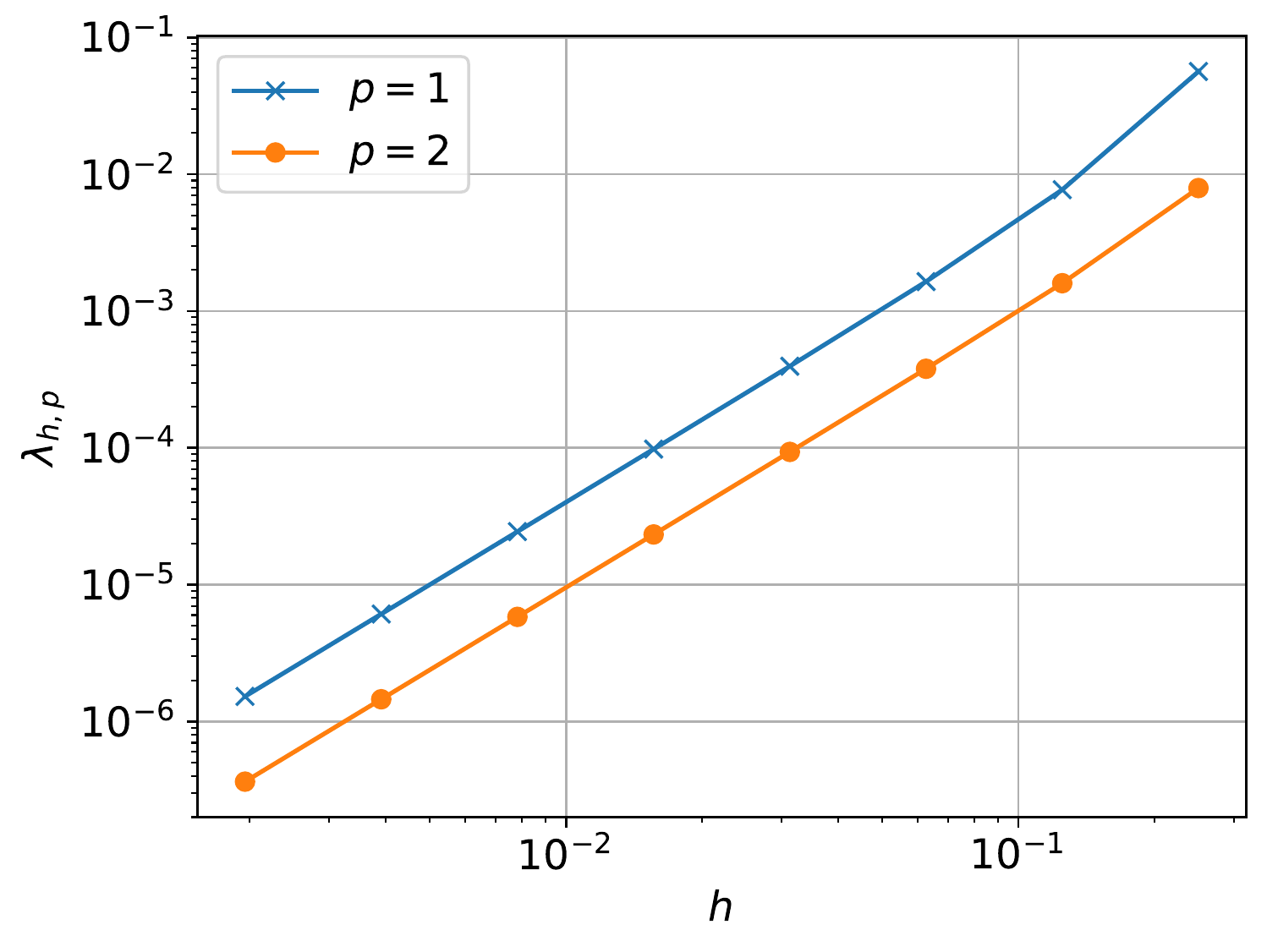} &
    \includegraphics[width=0.45\columnwidth]{./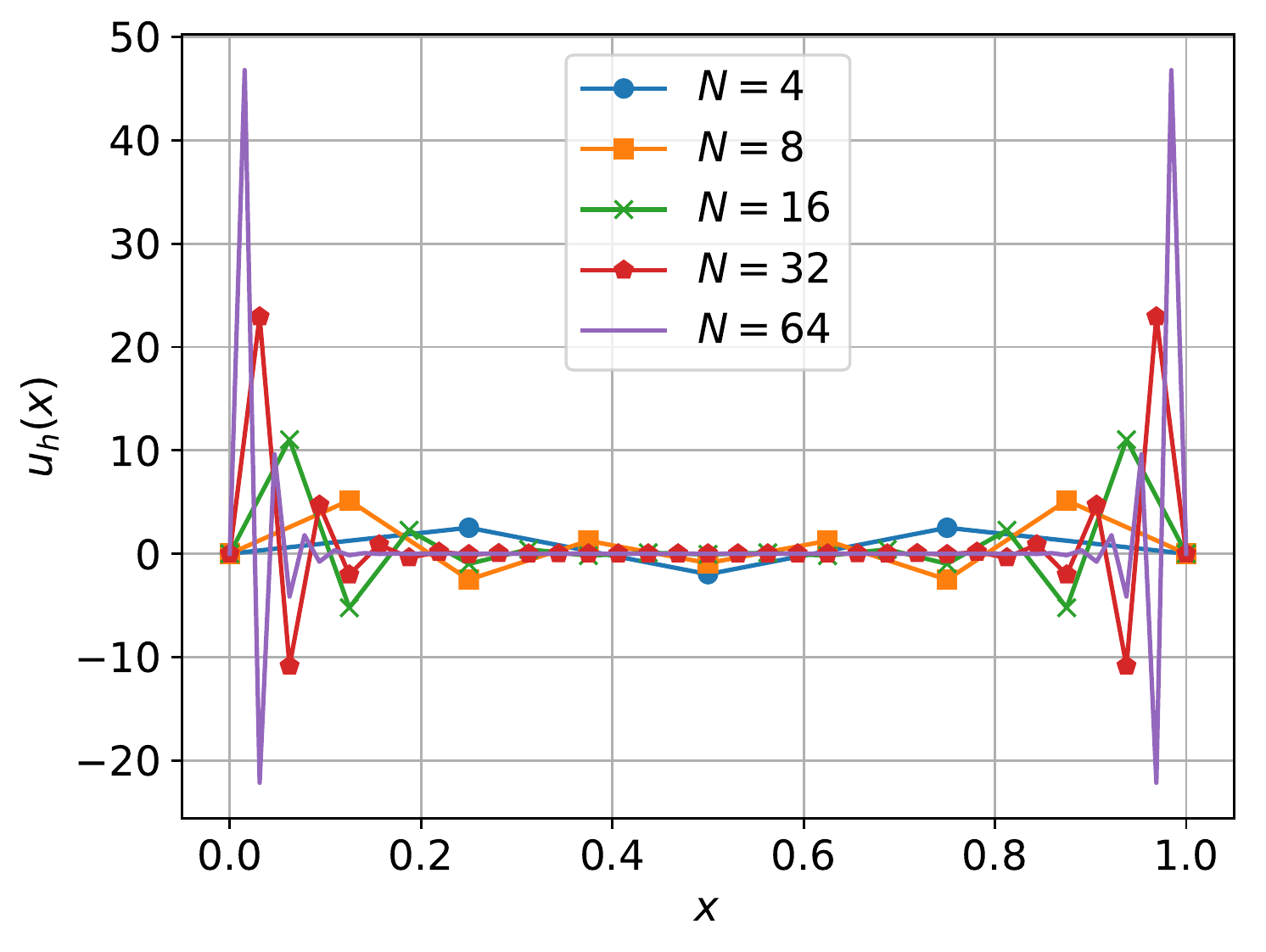} \\
    (a) & (b)
    \end{tabular}
    \caption{%
    (a): Behaviour of the generalized eigenvalue \(\lambda_{h,p}\),
         see~\eqref{eq:lambda_hp};
    (b): Solutions to the discretized system~\eqref{wf}, obtained
         on a uniform grid with \(N\) elements, see Example~\ref{ex:noexist}.
         In this figure, we show the results obtained using linear finite
         elements; quadratic elements lead to similar results.%
    }
    \label{fig:2}
  \end{figure}
  % \begin{figure}
  %   \centering
  %   \includegraphics[width=0.45\columnwidth]{./figure_03.pdf}
  %   \caption{Solutions to the discretized system~\eqref{wf}, obtained
  %   on a uniform grid with \(N\) elements.  In this figure, we show
  %   the results obtained using linear finite elements; quadratic elements
  %   lead to similar results.}
  %   \label{fig:3}
  % \end{figure}

  Finally, solving the discretized problems corresponding to \(f = 1\)
  on a sequence of refined meshes we obtain progressively more oscillatory
  discrete solutions shown in Figure~\ref{fig:2}~(b), which is in accordance with
  the non-existence of solutions asserted earlier.
\end{example}

%------------------------------------------------------------------------------
\subsection{Ill-posedness of~\eqref{wf} in \(H^1_0(\O;\Rn)\)}
\label{subsect:noexist_h1}

Before we proceed to ``positive'' existence results, we would like to
eliminate another possibility for solvability of~\eqref{wf}.

\begin{proposition}\label{prop:compact_embedding_h1}
  Under the assumption \(A \in L^2(\O\times\O)\),
   \(H^1_0(\O;\Rn)\) is compactly embedded into \(V_A\).
\end{proposition}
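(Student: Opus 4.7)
The plan is to exploit two facts: that the bilinear form underlying $(\cdot,\cdot)_A$ admits a straightforward Cauchy--Schwarz bound in $L^2(\Omega\times\Omega)$, and that an integral operator with $L^2$ kernel is Hilbert--Schmidt, hence compact on $L^2(\Omega)$. Continuous embedding will take two lines; compactness will follow from a standard weak-to-strong argument applied componentwise to the gradient.

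First I would establish the continuous embedding. For $u\in C^\infty_c(\Omega;\R^n)$, expand
\begin{equation*}
\|u\|_A^2 = \sum_{k,m=1}^n \int_\O\int_\O A(x,x')\,\partial_{x_m}u_k(x)\,\partial_{x'_m}u_k(x')\,\mathrm{d}x'\,\mathrm{d}x,
\end{equation*}
and apply the Cauchy--Schwarz inequality on $L^2(\Omega\times\Omega)$ to each term, bounding it by $\|A\|_{L^2(\Omega\times\Omega)}\,\|\partial_{x_m}u_k\|_{L^2(\O)}^2$. Summing yields $\|u\|_A^2\le \|A\|_{L^2(\Omega\times\Omega)}\,\|\nabla u\|_{L^2(\O)}^2$. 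Since $C^\infty_c(\O;\Rn)$ is dense in both $H^1_0(\O;\Rn)$ and (by definition) in $V_A$, this inequality extends to a continuous linear embedding $H^1_0(\O;\Rn)\hookrightarrow V_A$.

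Next, for compactness, I would let $T_A:L^2(\Omega)\to L^2(\Omega)$ be the integral operator with kernel $A$, which is Hilbert--Schmidt by the hypothesis $A\in L^2(\Omega\times\Omega)$ and therefore compact. The identity
\begin{equation*}
\|u\|_A^2 = \sum_{k,m=1}^n \bigl(T_A(\partial_{x_m}u_k),\,\partial_{x_m}u_k\bigr)_{L^2(\O)}
\end{equation*}
then does all the work. Given a sequence $u_j$ bounded in $H^1_0(\O;\Rn)$, pass (up to a subsequence) to a weak limit $u\in H^1_0(\O;\Rn)$; replacing $u_j$ by $u_j-u$ we may assume $u_j\rightharpoonup 0$ in $H^1_0(\O;\Rn)$, so that $\partial_{x_m}u_{j,k}\rightharpoonup 0$ in $L^2(\O)$ for every $k,m$. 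Compactness of $T_A$ gives $T_A(\partial_{x_m}u_{j,k})\to 0$ strongly in $L^2(\O)$, and since $\partial_{x_m}u_{j,k}$ remains bounded, each summand above tends to zero by Cauchy--Schwarz. Hence $\|u_j\|_A\to 0$, proving compactness.

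I don't expect any real obstacle: the whole argument reduces the question to the classical compactness of a Hilbert--Schmidt operator applied to a weakly null sequence. The only mild bookkeeping is to handle the vector- and matrix-valued nature by decomposing $\nabla u$ into its scalar entries, and to invoke density of $C^\infty_c(\O;\Rn)$ in $H^1_0(\O;\Rn)$ to legitimize the elementwise integration performed initially.
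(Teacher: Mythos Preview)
Your argument is correct and matches the paper's proof essentially line for line: the continuous embedding via a Cauchy--Schwarz bound on $L^2(\Omega\times\Omega)$ followed by density, and compactness via the complete continuity of the Hilbert--Schmidt operator with kernel $A$ applied to the weakly convergent gradients. The paper merely abbreviates the compactness step by pointing back to the proof of the preceding proposition, whereas you spell out the componentwise weak-to-strong argument explicitly.
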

\begin{proof}
  The embedding is continuous because \(\forall u,v \in C_c^\infty(\O;\Rn)\)
  we have the bound
  \begin{equation*}
    \begin{aligned}
      |(u,v)_A| &\le \|A\|_{L^2(\O\times\O)} \|\nabla u\|_{L^2(\O;\Rnn)}
      \|\nabla v\|_{L^2(\O;\Rnn)} \le
      \|A\|_{L^2(\O\times\O)} \|u\|_{H^1_0(\O;\Rn)}
      \|v\|_{H^1_0(\O;\Rn)},
    \end{aligned}
  \end{equation*}
  and because \(C_c^\infty(\O;\Rn)\) is dense in \(V_A\) and \(H^1_0(\O;\Rn)\).
  For compactness of the embedding the same arguments as in the proof of
  Proposition~\ref{prop:compact_embedding_l2} apply, but to the integral
  operator with kernel \(A\) while taking into account that the weak
  convergence in \(H^1_0(\O;\Rn)\) implies the weak convergence of the gradients
  in \(L^2(\O;\Rnn)\).
\end{proof}
As a consequence, \(V_A\) cannot be continuously embedded into \(H^1_0(\O;\Rn)\),
and therefore we must have
\begin{equation*}
  \inf_{v \in C_c^\infty(\O;\R^n)\setminus \{0\}}
  \frac{(v,v)_A}{\|v\|^2_{H^1(\O;\Rn)}} = 0,
\end{equation*}
and the bilinear form \(a(\cdot,\cdot)\) is not coercive on \(H^1_0(\O;\Rn)\).
Theorem~\ref{thm:LM} is not applicable to~\eqref{wf} when considered over
\(H^1_0(\O;\Rn)\).

%------------------------------------------------------------------------------
\subsection{Existence of solutions in \(L^2(\O;\Rn)\)}
\label{subsect:exist_l2}

% To construct n-dim examples you need Hankel transform!
% http://math.arizona.edu/~faris/methodsweb/hankel.pdf
% https://authors.library.caltech.edu/43489/7/Volume%202.pdf
%

By dropping the assumptions of Proposition~\ref{prop:compact_embedding_l2}
we can recover the existence of solutions to~\eqref{wf} in, for example,
\(L^2(\O;\Rn)\).
Consider the following example.

\begin{example}\label{ex:exist_l2}
  Let \(C=1\), \(\O=(0,1)\), \(\tilde{A}(d) = 1-d\).
  Notice that \(\tilde A(|\cdot|)\) is not differentiable at the origin.
  Then for any \(v \in C_c^\infty(\Omega;\R)\) we have
  \begin{equation*}
    \begin{aligned}
      &\|v\|_A^2 =
      \int_0^1 \int_0^1 (1-|x-y|)v'(x)v'(y)\,\mathrm{d}y\,\mathrm{d}x
      =
      \bigg(\underbrace{\int_0^1 v'(x)\,\mathrm{d}x}_{=0}\bigg)^2
      -2\int_0^1 \int_0^x (x-y) v'(x)v'(y)\,\mathrm{d}y\,\mathrm{d}x
      \\ &=
      -2\int_0^1 x v'(x)\int_0^x v'(y)\,\mathrm{d}y\,\mathrm{d}x
      +2\int_0^1 v'(x)\int_0^x yv'(y)\,\mathrm{d}y\,\mathrm{d}x
      \\&=
      -2\int_0^1 x v'(x) v(x)\,\mathrm{d}x
      +2\int_0^1 v'(x)\int_0^x [(yv(y))'-v(y)]\,\mathrm{d}y\,\mathrm{d}x
      =
      -2\int_0^1 v'(x)\int_0^x v(y)\,\mathrm{d}y\,\mathrm{d}x
      \\&=
      -2\int_0^1 v(y)\int_y^1 v'(x)\,\mathrm{d}x\,\mathrm{d}y
      = 2\|v\|^2_{L^2(\O;\R)}.
    \end{aligned}
  \end{equation*}
  Owing to the polarization identity this in fact implies that
  \(\forall u,v \in C_c^\infty(\O;\R): (u,v)_A = 2(u,v)_{L^2(\O;\R)}\).\footnote{%
  An even easier way of seeing this is by noting
  that \(-\Delta \widetilde{A}(|\cdot|) = 2\delta(\cdot)\) (or \(2\delta(\cdot)-\delta(\cdot-1)
  -\delta(\cdot+1)\) if \(\widetilde{A}(|\cdot|)\) is extended by zero outside of
  \(\O-\O\)), in the sense of distributions.
  This can then be used in~\eqref{eq:laplace1} immediately resulting in
  \((u,v)_A = (u,-\Delta \widetilde{A}(|\cdot|)*v)_{L^2(\R)} = 2(u,v)_{L^2(\R)}
  = 2(u,v)_{L^2(\O)}\)
   (or, respectively, in
  \((u,v)_A = (u,-\Delta \widetilde{A}(|\cdot|)*v)_{L^2(\R)} = 2(u,v)_{L^2(\R)}
  -(u,v(\cdot-1))_{L^2(\R)} -(u,v(\cdot+1))_{L^2(\R)}
  = 2(u,v)_{L^2(\O)}\)).}
  Since \(C_c^\infty(\O;\R)\) is dense in both \(V_A\) (per definition) and
  \(L^2(\O;\R)\) (owing to mollifying), we have that the spaces
  \(V_A\) and \(2^{-1/2} L^2(\O;\R)\) are isometrically isomorphic.
  The functional \(\ell\) defined by~\eqref{eq:al} is bounded on \(V_A\),
  and Theorem~\ref{thm:LM} is applicable.

  Clearly the problem~\eqref{wf} is uniquely solvable for any
  \(f\in L^2(\O)\) by \(u = \frac{1}{2} f\).
  In particular the solution possesses no smoothness beyond that of \(f\),
  nor does it satisfy the homogeneous Dirichlet boundary conditions,
  unless \(f\) does.
\end{example}

%------------------------------------------------------------------------------
\subsection{Non-local Korn's inequality, and coercivity and boundedness in \(V_A\)}
\label{subsec:nlKorn}

Before we go any further with positive existence results in specific spaces for
certain kernels, we would like to establish the fact that under relatively mild
assumptions the bilinear form \(a(\cdot,\cdot)\) is coercive and bounded on
\(V_A\), that is, it satisfies the assumptions of Lax--Milgram's lemma.
Such results, including a non-local version of Korn's inequality, have been
proved in~\cite{altan89} under strong assumptions on the kernel \(A\),
including its continuity, which are too restrictive for our purposes.

We begin with the case of square-integrable kernels \(A \in L^2(\O\times\O)\).
Note that for such kernels, owing to the density of \(C^\infty_c(\O)\)
in \(L^2(\O)\), the non-strict version of inequality~\eqref{pd} holds for all
\(\phi \in L^2(\O)\).

The proofs of boundedness and coercivity of \(a(\cdot,\cdot)\) in the case
of square integrable kernels
rely on the spectral theorem for compact self-adjoint operators, which is
restated here for the reader's convenience; for details see for
example~\cite[Chapter~6]{brezis}.

\begin{theorem}[Spectral decomposition of compact self-adjoint operators]
  \label{thm:spectral}
  Let \(\O\subset \Rn\) be an open bounded domain, and let
  \(K \in L^2(\O\times\O)\) be s symmetric kernel.
  Then there is an infinite sequence of real eigenvalues
  \(|\lambda_1| \geq |\lambda_2| \geq \dots \geq 0\),
  \(\lim_{k\to\infty} \lambda_k=0\),
  and a corresponding sequence of eigenfunctions \(\varphi_k\), \(k=1,2,\dots\),
  which forms an orthonormal basis in \(L^2(\O)\),
  solving the eigenvalue problem
  \begin{equation*}
    %\label{eq:spectral}
    \int_\O K(x,x')\varphi(x')\,\mathrm{d}x' = \lambda \varphi(x).
  \end{equation*}
\end{theorem}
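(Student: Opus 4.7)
The plan is to associate to the kernel $K$ the integral operator $T\colon L^2(\O) \to L^2(\O)$ defined by $(T\varphi)(x) = \int_\O K(x,x')\varphi(x')\,\mathrm{d}x'$, and then deduce the claimed spectral decomposition by applying the abstract spectral theorem for compact self-adjoint operators on a separable Hilbert space (see, e.g., Brezis, Chapter~6). Thus the verification reduces to checking three properties of $T$: boundedness, self-adjointness, and compactness.

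Boundedness follows immediately from Cauchy--Schwarz in the $x'$-variable combined with Fubini, yielding $\|T\varphi\|_{L^2(\O)} \le \|K\|_{L^2(\O\times\O)}\|\varphi\|_{L^2(\O)}$. Self-adjointness also reduces, by Fubini, to the assumed symmetry $K(x,x')=K(x',x)$ a.e., giving $(T\varphi,\psi)_{L^2(\O)} = (\varphi,T\psi)_{L^2(\O)}$ for every $\varphi,\psi\in L^2(\O)$. For compactness, I would fix an orthonormal basis $\{e_i\}$ of $L^2(\O)$, observe that $\{e_i(x)e_j(x')\}_{i,j}$ is then an orthonormal basis of $L^2(\O\times\O)$, expand $K$ in this basis, and truncate to obtain a sequence $K_N \to K$ in $L^2(\O\times\O)$ with each $K_N$ inducing a finite-rank operator $T_N$. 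Since the operator norm is dominated by the Hilbert--Schmidt norm, and $\|T - T_N\|_{\mathrm{HS}} = \|K - K_N\|_{L^2(\O\times\O)} \to 0$, we conclude that $T_N \to T$ in operator norm, so $T$ is compact as a norm-limit of finite-rank operators.

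Once these three properties are in place, the abstract spectral theorem provides a countable orthonormal sequence $\{\varphi_k\}$ of eigenfunctions with associated real eigenvalues $\{\lambda_k\}$ satisfying $|\lambda_1| \ge |\lambda_2| \ge \cdots \to 0$, and such that $\{\varphi_k\}$ is an orthonormal basis of $\overline{\operatorname{range}(T)} = (\ker T)^\perp$. To get an orthonormal basis of the entire space $L^2(\O)$, as stated in the theorem, one simply adjoins to this family any orthonormal basis of $\ker T$, each element of which is trivially an eigenfunction for the eigenvalue $0$.

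The only non-routine step is the compactness of $T$: the remaining items (boundedness, self-adjointness, and the packaging of eigenvalues and eigenfunctions into a basis) are direct from elementary Hilbert-space manipulations and the cited abstract theorem. The hypothesis $K \in L^2(\O\times\O)$ is exactly what is needed to make the Hilbert--Schmidt approximation argument go through, so I do not expect any serious difficulty beyond recording these observations carefully.
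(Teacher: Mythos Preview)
Your proposal is correct and follows the standard route to this result. However, note that the paper does not actually prove this theorem: it is merely restated for the reader's convenience, with a reference to \cite[Chapter~6]{brezis} for the details. In that sense your write-up goes beyond what the paper does, supplying the standard Hilbert--Schmidt argument that the cited reference would provide.
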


Note that under the additional assumption that the kernel \(K(\cdot,\cdot)\)
in the previous theorem is strictly positive definite in the sense of~\eqref{pd},
all eigenvalues \(\lambda_k\) must be non-negative,  as
\(\lambda_k = \lambda_k(\varphi_k,\varphi_k)_{L^2(\O)} =
\int_\O K(x,x')\varphi(x')\varphi_k(x)\,\mathrm{d}x'\,\mathrm{d}x \ge 0\),
see the comment before the statement of the theorem.

\begin{proposition}\label{prop:VAbound}
  Suppose that \(A \in L^2(\O\times\O)\) is a strictly positive kernel and
  the stiffness tensor \(C\) satisfies~\eqref{eq:localcoercivity}.
  Then for all \(u,v \in V_A\) we have the inequality
  \(|a(u,v)|\leq \overline{C} \|u\|_A \|v\|_A\).
\end{proposition}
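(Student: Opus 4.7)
The plan is to invoke the spectral decomposition theorem (Theorem~\ref{thm:spectral}) for the symmetric Hilbert--Schmidt kernel \(A\), use the resulting eigenbasis of \(L^2(\Omega)\) to diagonalize the bilinear form \((\cdot,\cdot)_A\) on matrix-valued functions, and then finish with two Cauchy--Schwarz inequalities combined with the pointwise bound on \(C\). Since \(C_c^\infty(\Omega;\Rn)\) is dense in \(V_A\) by definition, it suffices to establish the estimate for smooth compactly supported \(u,v\) and extend by continuity.

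Let \(\{\varphi_k\}_k\) be the orthonormal eigenbasis of \(L^2(\Omega)\) delivered by Theorem~\ref{thm:spectral}, with non-negative eigenvalues \(\lambda_k\ge 0\) (as noted in the remark following the theorem). For \(u\in C_c^\infty(\Omega;\Rn)\) I expand each entry of \(\nabla u\) in this basis,
\begin{equation*}
  \nabla u(x)=\sum_k G_k^u\,\varphi_k(x),\qquad G_k^u\in\R^{n\times n},
\end{equation*}
so that \(\varepsilon_u(x)=\sum_k \Gamma_k^u\,\varphi_k(x)\) with \(\Gamma_k^u=(G_k^u+(G_k^u)^\mathrm{T})/2\in\Sn\). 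Using \(\int_\O A(x,x')\varphi_k(x')\,\mathrm{d}x'=\lambda_k\varphi_k(x)\) together with the orthonormality of \(\{\varphi_k\}\), an entry-by-entry Parseval computation gives the diagonal representations
\begin{equation*}
  a(u,v)=\sum_k \lambda_k\,(C\Gamma_k^u):\Gamma_k^v,\qquad \|u\|_A^2=\sum_k \lambda_k |G_k^u|^2,
\end{equation*}
where \(|\cdot|\) denotes the Frobenius norm.

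The pointwise bound~\eqref{eq:localcoercivity} tells me that \(C\) acts on \((\Sn,:)\) as a symmetric operator of norm at most \(\overline{C}\), so that \(|(C\Gamma):\Delta|\le\overline{C}|\Gamma||\Delta|\) for all \(\Gamma,\Delta\in\Sn\). Plugging this into the diagonal formula and then applying weighted Cauchy--Schwarz in \(\ell^2\) with the non-negative weights \(\lambda_k\) yields
\begin{equation*}
  |a(u,v)|\le \overline{C}\sum_k\lambda_k|\Gamma_k^u||\Gamma_k^v|\le \overline{C}\Big(\sum_k\lambda_k|\Gamma_k^u|^2\Big)^{1/2}\Big(\sum_k\lambda_k|\Gamma_k^v|^2\Big)^{1/2}.
\end{equation*}
The last step is to pass from \(\varepsilon\) back to \(\nabla\): since symmetric and antisymmetric matrices are Frobenius-orthogonal, \(|\Gamma_k^u|\le|G_k^u|\) for every \(k\), so each of the two factors is bounded by \(\|u\|_A\) and \(\|v\|_A\) respectively. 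Density of \(C_c^\infty(\Omega;\Rn)\) in \(V_A\) then transfers the bound to the entire space.

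The only mildly delicate point is the interchange of integration against \(A\) with the series expansions used to derive the two diagonal formulas. For \(u,v\in C_c^\infty(\Omega;\Rn)\) each entry of \(\nabla u\) lies in \(L^2(\Omega)\), so the Fourier-type series converges in \(L^2\), and the interchange is justified by Fubini together with \(A\in L^2(\Omega\times\Omega)\) guaranteeing the Hilbert--Schmidt structure. Beyond this bookkeeping the argument is elementary linear algebra plus two Cauchy--Schwarz applications, so I do not anticipate any further obstacle.
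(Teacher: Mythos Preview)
Your argument is correct and follows essentially the same route as the paper: both proofs reduce to smooth compactly supported functions by density, diagonalize via the spectral expansion of the Hilbert--Schmidt operator with kernel \(A\), and then combine the upper bound \(\overline{C}\) on the stiffness tensor with the Frobenius-orthogonality of symmetric and skew-symmetric matrices to pass from \(\varepsilon_u\) to \(\nabla u\). The only cosmetic difference is that the paper first applies Cauchy--Schwarz to the bilinear form \(a(\cdot,\cdot)\) itself (after checking it is an inner product, which uses the local Korn inequality) and then bounds the diagonal term \(a(u,u)\) via the spectral expansion, whereas you expand first and apply Cauchy--Schwarz at the \(\ell^2\) level---your ordering is slightly more economical since it does not require a separate verification that \(a\) is positive definite.
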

\begin{proof}
  Owing to the density of \(C_c^\infty(\O;\Rn)\) in \(V_A\) it is sufficient
  to prove this inequality for \(u,v \in C_c^\infty(\O;\Rn)\).

  Let \(C^{1/2}\) be a square root of the positive definite symmetric tensor
  \(C\).
  Owing to the strict positive definiteness of \(A\) the bilinear form
  \(a(\cdot,\cdot)\) is an inner product on \(C_c^\infty(\O;\Rn)\), since for
  each \(u = C_c^\infty(\O;\Rn)\)
  we have the non-negativity \(a(u,u)=\int_\O \int_\O A(x,x') (C^{1/2} \varepsilon_u(x)):
  (C^{1/2}\varepsilon_u(x'))\,\mathrm{d}x\,\mathrm{d}x'\geq 0\)
  with equality only when
  \(C^{1/2} \varepsilon_u=0\), and thereby owing to the (local) Korn's inequality
  also \(u=0\).
  Therefore, Cauchy--Bunyakovsky--Schwarz inequality applies to \(a(\cdot,\cdot)\)
  and for each  \(u,v \in C_c^\infty(\O;\Rn)\) we can write
  \(|a(u,v)|^2 \leq a(u,u) a(v,v)\).
  Thus to prove the claim it is sufficient to show that
  \(a(u,u) \leq \overline{C} (u,u)_A\), \(\forall u \in C_c^\infty(\O;\Rn)\).

  Let \((\lambda_k,\varphi_k)\) be the eigenvalue-eigenfunction pairs
  for the integral operator with kernel \(A\) provided by Theorem~\ref{thm:spectral}.
  We expand \(\nabla u\) in the basis \(\varphi_k\), that is,
  we put  \(G_{u,k} = \int_\O \varphi_k(x)\nabla u(x)\,\mathrm{d}x \in\Rnn\).
  The corresponding expansion coefficients for strains
  are clearly \(\varepsilon_{u,k} = (G_{u,k}+G_{u,k}^{\mathrm{T}})/2 \in\S^n\).

  As a result we have the string of inequalities
  \begin{equation*}
  \begin{aligned}
  a(u,u)&=\int_\O\int_\O A(x,x') \varepsilon_u(x):\varepsilon_u(x')\,\mathrm{d}x'\,\mathrm{d}x
  =\sum_{k,m=1}^\infty C\varepsilon_{u,k}:\varepsilon_{u,m}
  \int_\O\int_\O A(x,x')\varphi_k(x)\varphi_m(x')\,\mathrm{d}x'\,\mathrm{d}x \\
  &=\sum_{k=1}^\infty \lambda_k C\varepsilon_{u,k}:\varepsilon_{u,k}
  \leq \overline{C} \sum_{k=1}^\infty \lambda_k \varepsilon_{u,k}:\varepsilon_{u,k}
  \leq \overline{C} \sum_{k=1}^\infty \lambda_k G_{u,k}:G_{u,k} \\
  &=\overline{C} \sum_{k=1}^\infty
  \int_\O\int_\O A(x,x')G_{u,k}:G_{u,k} \varphi_k(x)\varphi_k(x')\,\mathrm{d}x'\,\mathrm{d}x
  =\overline{C} \sum_{k,m=1}^\infty
  \int_\O\int_\O A(x,x') G_{u,k}:G_{u,m}\varphi_k(x)\varphi_m(x')\,\mathrm{d}x'\,\mathrm{d}x \\
  &=\overline{C}\int_\O\int_\O A(x,x') \nabla u(x):\nabla u(x')\,\mathrm{d}x'\,\mathrm{d}x
  = \overline{C} (u,u)_A,
  \end{aligned}
  \end{equation*}
  where we have utilized the non-negativity of eigenvalues \(\lambda_k\),
  \(L^2(\O)\)-orthonormality of the eigenfunctions \(\varphi_k\),
  and the orthogonality of symmetric and skew-symmetric tensors yeilding the
  inequality \(\varepsilon_{u,k}:\varepsilon_{u,k} \leq
  G_{u,k}:G_{u,k}\).
\end{proof}

The coercivity of \(a(\cdot,\cdot)\) relies on the following non-local version
of Korn's inequality, which is stated for \(n\geq 2\) since the result for
\(n=1\) is trivial.

\begin{lemma}[Non-local Korn's inequality]\label{korn}
  For \(n\ge 2\), let \(A\in L^2(\O\times \O)\) be a symmetric positive definite kernel such
  that its (weak) partial derivatives are in \(L^1_{loc}(\O\times \O)\).
  Additionally, we assume that these derivatives verify the equality
  \begin{equation}\label{eq:hypderiv}
    \partial_{x_k} A(x,x')=\gamma \partial_{x_k'}A(x,x'),\quad \text{a.e.\ }(x,x')\in \O\times\O,
  \end{equation}
  where \(\gamma^2 = 1\).
  Then for all \(u\in V_A\) we have the inequality
  \begin{equation*}
    2\int_\O\int_\O A(x,x') \varepsilon_u(x):\varepsilon_u(x')\,\mathrm{d}x'\,\mathrm{d}x\ge (u,u)_A.
  \end{equation*}
\end{lemma}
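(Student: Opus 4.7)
The plan is to reduce the claimed inequality to a manifestly nonnegative quantity by algebraic manipulation of the integrand followed by repeated integration by parts. The starting observation is that
\[
2\,\varepsilon_u(x):\varepsilon_u(x') = \nabla u(x):\nabla u(x') + \sum_{k,m=1}^n \partial_{x_m}u_k(x)\,\partial_{x'_k}u_m(x'),
\]
which I would verify by expanding $\varepsilon_u = \tfrac{1}{2}(\nabla u + (\nabla u)^{\mathrm T})$ and using the symmetry of the summation indices. Integrating this identity against $A(x,x')$ over $\Omega\times\Omega$ gives
\[
2\int_\Omega\!\int_\Omega A(x,x')\,\varepsilon_u(x):\varepsilon_u(x')\,\mathrm dx'\,\mathrm dx = (u,u)_A + I(u),
\]
where $I(u) := \sum_{k,m}\int_\Omega\!\int_\Omega A(x,x')\,\partial_{x_m}u_k(x)\,\partial_{x'_k}u_m(x')\,\mathrm dx'\,\mathrm dx$. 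So the whole claim reduces to showing $I(u)\ge 0$.

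The heart of the proof is to show that $I(u)$ is in fact equal to the nonnegative quantity $\int_\Omega\!\int_\Omega A(x,x')\,\diver u(x)\,\diver u(x')\,\mathrm dx'\,\mathrm dx$. I would first establish this identity for $u\in C_c^\infty(\Omega;\R^n)$, where all boundary terms vanish. Integrating by parts in $x_m$ transfers the derivative off $u_k$ onto $A$; using the hypothesis \eqref{eq:hypderiv} I swap it to $\partial_{x'_m} A$ (picking up a factor $\gamma$); integrating by parts again in $x'_m$ then folds it with $\partial_{x'_k} u_m$ to produce $\partial_{x'_k}\diver u(x')$. A third integration by parts, in $x'_k$, moves that derivative back onto $A$; applying \eqref{eq:hypderiv} once more converts it to $\partial_{x_k}A$ (giving the second factor of $\gamma$, so that $\gamma^2=1$ makes the overall sign $+1$); finally integration by parts in $x_k$ produces $\diver u(x)$. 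The result is $I(u) = \int\!\int A\,\diver u\,\diver u$, which is nonnegative by the (a.e.\ extension of) Mercer's condition \eqref{pd} applied to the scalar test function $\diver u\in L^2(\Omega)$.

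The final step is to extend the inequality from $u\in C_c^\infty(\Omega;\R^n)$ to arbitrary $u\in V_A$ by the density of smooth compactly supported functions in $V_A$, which is the way $V_A$ was defined. Both sides of the inequality are continuous with respect to the $\|\cdot\|_A$-norm: the right-hand side is $\|u\|_A^2$ by definition, and the left-hand side equals $a(u,u)$ with $C=\mathrm{Id}$, hence is controlled by $\|u\|_A^2$ via Proposition~\ref{prop:VAbound}. Passing to the limit concludes the proof.

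The delicate step is the chain of four integrations by parts, because $A$ is only assumed square-integrable with locally integrable weak derivatives. The hypothesis $\partial_{x_k}A\in L^1_{\mathrm{loc}}$ and the compact support of $u$ (so that the relevant multipliers $u_k\partial'_k u_m$, $u_k\diver u$, etc.\ are bounded with compact support inside $\Omega\times\Omega$) are exactly what is needed to make each integration by parts legitimate; I would pair the weak derivative of $A$ with a smooth compactly supported test function built from components of $u$, using Fubini to reduce to one-variable integration by parts. Once that is done carefully, the rest of the argument is algebra and an application of Mercer's condition.
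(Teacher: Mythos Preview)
Your proposal is correct and follows essentially the same route as the paper's proof: the same algebraic decomposition of \(2\,\varepsilon_u(x):\varepsilon_u(x')\) into \(\nabla u(x):\nabla u(x')\) plus a cross term, the same chain of four integrations by parts together with two applications of~\eqref{eq:hypderiv} (so that \(\gamma^2=1\) neutralizes the sign) to rewrite the cross term as \(\int_\O\int_\O A(x,x')\diver u(x)\diver u(x')\,\mathrm{d}x'\,\mathrm{d}x\), and the same appeal to positive definiteness of \(A\) and density of \(C_c^\infty(\O;\Rn)\) in \(V_A\). The only differences are cosmetic (the order in which the four integrations by parts are performed and a relabelling of the summation indices \(k\leftrightarrow m\)).
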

\begin{proof}
  Note that in view of Proposition~\ref{prop:VAbound} (applied with \(C=I\),
  the identity tensor) and the density of \(C_c^\infty(\O;\Rn)\) in \(V_A\),
  it is sufficient to establish the claim for \(u\in C_c^\infty(\O;\Rn)\).
  For any such smooth function with compact support we have
 \begin{equation*}
 \begin{aligned}
 \int_\O\int_\O A(x,x') \varepsilon_u(x)&:\varepsilon_u(x')\,\mathrm{d}x'\,\mathrm{d}x
 =\frac{1}{4} \sum_{k,m=1}^n \int_\O\int_\O A(x,x')\left[ \left( \partial_{x_m} u_k(x)+\partial_{x_k}u_m(x)\right)
 \cdot\left( \partial_{x_m'} u_k(x')+\partial_{x_k'}u_m(x')\right)\right]\,\mathrm{d}x'\,\mathrm{d}x \\
 &=\frac{1}{2} \sum_{k,m=1}^n \int_\O\int_\O A(x,x') \left[ \left(\partial_{x_k}u_m(x)\partial_{x_k'}u_m(x')\right)
  +\left( \partial_{x_k}u_m(x)\partial_{x_m'}u_k(x')\right)\right] \,\mathrm{d}x'\, \mathrm{d}x\\
 &=\frac{1}{2}\int_\O\int_\O A(x,x') \nabla u(x):\nabla u(x')\,\mathrm{d}x'\,\mathrm{d}x
 +\frac{1}{2}\int_\O\int_\O A(x,x') \left( \partial_{x_k}u_m(x)\partial_{x_m'}u_k(x')\right) \,\mathrm{d}x'\, \mathrm{d}x
\end{aligned}
\end{equation*}
 Applying now successively the definition of weak derivative and the hypothesis~\eqref{eq:hypderiv},
 we get the string of equalities
 \begin{equation*}
 \begin{aligned}
 &\int_\O\int_\O A(x,x') \partial_{x_k } u_m(x) \partial_{x_m'} u_k(x')\,\mathrm{d}x'\,\mathrm{d}x
 = -\int_\O\int_\O \partial_{x_k}A(x,x') u_m(x) \partial_{x_m'} u_k(x')\,\mathrm{d}x'\,\mathrm{d}x
 \\&= -\gamma\int_\O\int_\O  \partial_{x_k'}A(x,x') u_m(x) \partial_{x_m'} u_k(x')\,\mathrm{d}x'\,\mathrm{d}x
 = \gamma\int_\O\int_\O A(x,x') u_m(x) \partial_{x_k'x_m'}^2 u_k(x')\,\mathrm{d}x'\,\mathrm{d}x \\
 &=-\gamma\int_\O\int_\O  \partial_{x_m'}A(x,x') u_m(x) \partial_{x_k'} u_k(x')\,\mathrm{d}x'\,\mathrm{d}x
 =-\gamma^2\int_\O\int_\O  \partial_{x_m}A(x,x') u_m(x) \partial_{x_k'} u_k(x')\,\mathrm{d}x'\,\mathrm{d}x \\
 &=\int_\O\int_\O  A(x,x') \partial_{x_m}u_m(x) \partial_{x_k'} u_k(x')\,\mathrm{d}x'\,\mathrm{d}x.
 \end{aligned}
 \end{equation*}
 Consequently, we can write
 \begin{equation*}
 \begin{aligned}
 & \int_\O\int_\O A(x,x') \varepsilon_u(x):\varepsilon_u(x')\,\mathrm{d}x'\,\mathrm{d}x
 \\&=\frac{1}{2}\int_\O\int_\O A(x,x') \nabla u(x):\nabla u(x')\,\mathrm{d}x'\,\mathrm{d}x
  + \frac{1}{2}\int_\O\int_\O A(x,x')\div_x (u(x))\div_{x'}(u(x'))\,\mathrm{d}x'\,\mathrm{d}x \\
 & \ge \frac{1}{2} \int_\O\int_\O A(x,x') \nabla u(x):\nabla u(x')\,\mathrm{d}x'\,\mathrm{d}x
  =\frac{1}{2}(u,u)_A,
 \end{aligned}
 \end{equation*}
 where we have utilized the positive definiteness of the kernel \(A\) and
 the definition of the inner product \((\cdot,\cdot)_A\).
\end{proof}

\begin{proposition}\label{prop:VAcoercive}
  Suppose that \(A\) is a strictly positive definite kernel, which verifies
  the assumptions of Lemma~\ref{korn}, and
  the stiffness tensor \(C\) satisfies~\eqref{eq:localcoercivity}.
  Then for all \(u \in V_A\) we have the inequality
  \(a(u,u)\geq \tfrac{1}{2}\underline{C} (u,u)_A\).
\end{proposition}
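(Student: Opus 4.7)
The plan is to combine the spectral reassembly used in the proof of Proposition~\ref{prop:VAbound} with the non-local Korn's inequality from Lemma~\ref{korn}. By the density of $C_c^\infty(\O;\Rn)$ in $V_A$ and the boundedness of $a(\cdot,\cdot)$ from Proposition~\ref{prop:VAbound}, it suffices to establish the inequality for $u\in C_c^\infty(\O;\Rn)$.

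For such $u$, I would expand $\nabla u$ in the orthonormal $L^2(\O)$-basis $\{\varphi_k\}$ of eigenfunctions of the integral operator with kernel $A$ provided by Theorem~\ref{thm:spectral}, setting $G_{u,k}=\int_\O \varphi_k(x)\nabla u(x)\,\mathrm{d}x\in\Rnn$, $\varepsilon_{u,k}=(G_{u,k}+G_{u,k}^{\mathrm{T}})/2\in\Sn$, and denoting by $\lambda_k\ge 0$ the corresponding non-negative eigenvalues. Carrying out the same Fubini manipulation as in Proposition~\ref{prop:VAbound}, but now invoking the lower bound in~\eqref{eq:localcoercivity} instead of the upper one, yields
\begin{equation*}
a(u,u)=\sum_{k=1}^\infty \lambda_k\, C\varepsilon_{u,k}:\varepsilon_{u,k}\;\ge\; \underline{C}\sum_{k=1}^\infty \lambda_k\, \varepsilon_{u,k}:\varepsilon_{u,k}.
\end{equation*}

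Next I would reassemble the right-hand side by running the spectral computation backwards: since $\varepsilon_u=\sum_k \varepsilon_{u,k}\varphi_k$ componentwise in $L^2(\O;\Sn)$ and $\int_\O\int_\O A(x,x')\varphi_k(x)\varphi_m(x')\,\mathrm{d}x'\,\mathrm{d}x=\lambda_k\delta_{km}$, the sum equals $\int_\O\int_\O A(x,x')\varepsilon_u(x):\varepsilon_u(x')\,\mathrm{d}x'\,\mathrm{d}x$. Applying the non-local Korn's inequality of Lemma~\ref{korn} to this last quantity gives the lower bound $\tfrac{1}{2}(u,u)_A$, and multiplying by $\underline{C}$ delivers the claim.

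There is no genuine technical obstacle, since the two ingredients have been fully set up in the preceding results. The only point requiring a little care is that the bound $\varepsilon_{u,k}:\varepsilon_{u,k}\le G_{u,k}:G_{u,k}$ used in Proposition~\ref{prop:VAbound} goes in the \emph{wrong} direction for the present argument; this is precisely why the step from $\varepsilon_u$ back to $\nabla u$ cannot be done termwise in the spectral sum and must instead be carried out via Lemma~\ref{korn} after the reassembly, which is the sole reason the hypothesis~\eqref{eq:hypderiv} on the derivatives of $A$ enters.
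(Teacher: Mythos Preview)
Your proposal is correct and follows essentially the same route as the paper's own proof: reduce to smooth compactly supported $u$ by density and Proposition~\ref{prop:VAbound}, diagonalize via the spectral decomposition of the integral operator with kernel $A$, apply the lower bound in~\eqref{eq:localcoercivity} termwise to obtain $a(u,u)\ge \underline{C}\int_\O\int_\O A(x,x')\varepsilon_u(x):\varepsilon_u(x')\,\mathrm{d}x'\,\mathrm{d}x$, and conclude with Lemma~\ref{korn}. Your closing remark about why the symmetric/skew-symmetric estimate cannot be used here and why Lemma~\ref{korn} is the right substitute is exactly the point.
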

\begin{proof}
  Note that in view of Proposition~\ref{prop:VAbound} and the density of
  \(C_c^\infty(\O;\Rn)\) in \(V_A\), it is sufficient to establish the claim
  for \(u\in C_c^\infty(\O;\Rn)\).

  The proof follows the lines of that of Proposition~\ref{prop:VAbound} and
  is only included to keep this document self-contained.
  Let \((\lambda_k,\varphi_k)\) be the eigenvalue-eigenfunction pairs
  for the integral operator with kernel \(A\) provided by
  Theorem~\ref{thm:spectral}.
  We expand the strains \(\varepsilon_u = (\nabla u + (\nabla u)^{\mathrm{T}})/2\)
  in the basis \(\varphi_k\), that is,
  we put  \(\varepsilon_{u,k} = \int_\O \varphi_k(x)\varepsilon_u(x)\,\mathrm{d}x
  \in \S^n\).

  Then we have the string of inequalities
  \begin{equation*}
  \begin{aligned}
  a(u,u)&=\int_\O\int_\O A(x,x') C\varepsilon_u(x):\varepsilon_u(x')\,\mathrm{d}x'\,\mathrm{d}x
  =\sum_{k,m=1}^\infty C\varepsilon_{u,k}:\varepsilon_{u,m}
  \int_\O\int_\O A(x,x')\varphi_k(x)\varphi_m(x')\,\mathrm{d}x'\,\mathrm{d}x\\
  &=\sum_{k=1}^\infty \lambda_k C\varepsilon_{u,k}:\varepsilon_{u,k}
  \geq \underline{C} \sum_{k=1}^\infty \lambda_k \varepsilon_{u,k}:\varepsilon_{u,k}
  =\underline{C} \sum_{k=1}^\infty
  \int_\O\int_\O A(x,x')\varepsilon_{u,k}:\varepsilon_{u,k} \varphi_k(x)\varphi_k(x')\,\mathrm{d}x'\,\mathrm{d}x
  \\&=\underline{C} \sum_{k,m=1}^\infty
  \int_\O\int_\O A(x,x') \varepsilon_{u,k}:\varepsilon_{u,m}\varphi_k(x)\varphi_m(x')\,\mathrm{d}x'\,\mathrm{d}x
  =\underline{C}\int_\O\int_\O A(x,x') \varepsilon_u(x):\varepsilon_u(x')\,\mathrm{d}x'\,\mathrm{d}x
  \ge \frac{1}{2}\underline{C} (u,u)_A,
  \end{aligned}
  \end{equation*}
  where we have utilized the non-negativity of eigenvalues \(\lambda_k\),
  \(L^2(\O)\)-orthonormality of the eigenfunctions \(\phi_k\),
  and Lemma~\ref{korn}.
\end{proof}

We are now ready to drop the square integrability and differentiability requirements,
which are not satisfied by the singular kernels we want to utilize in what follows.

\begin{theorem}\label{thm:L1_bnd_coerc}
  Assume that \(A(\cdot,\cdot)\in L^1(\O\times \O)\) is symmetric and strictly
  positive definite, cf.~\eqref{pd}.
  Then the conclusions of Propositions~\ref{prop:VAbound} and~\ref{prop:VAcoercive}
  hold for this kernel.
  That is, the bilinear form \(a(\cdot,\cdot)\) defined in~\eqref{eq:al} is bounded
  and coercive in \(V_A\).
\end{theorem}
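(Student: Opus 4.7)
The natural strategy is approximation: regularize the $L^1$ kernel $A$ by a sequence of smoother (in particular, square-integrable) kernels $A_\epsilon$ that retain all structural properties needed for Propositions~\ref{prop:VAbound} and~\ref{prop:VAcoercive}, apply those propositions to each $A_\epsilon$, and pass to the limit $\epsilon\downarrow 0$. Concretely, since $A(x,x')=\tilde{A}(|x-x'|)$ is of convolution type, I would extend $\tilde{A}(|\cdot|)$ by zero from $\Omega-\Omega$ to all of $\R^n$ and set $A_\epsilon(x,x'):=(\rho_\epsilon * \tilde{A}(|\cdot|))(x-x')$, where $\rho_\epsilon = \epsilon^{-n}\rho(\cdot/\epsilon)$ is a standard radial mollifier chosen so that $\hat{\rho}\ge 0$ (for instance $\rho=\eta*\eta$ for a non-negative even bump $\eta$, giving $\hat\rho=|\hat\eta|^2\ge 0$). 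Each $A_\epsilon$ is then smooth and in $L^2(\O\times\O)$; symmetric; translation invariant, so the derivative identity~\eqref{eq:hypderiv} holds automatically with $\gamma=-1$; and positive semidefinite, because Bochner's theorem applied to the strictly positive definite kernel $A$ yields $\widehat{\tilde{A}(|\cdot|)}\ge 0$, so the Fourier symbol $\hat\rho_\epsilon\cdot\widehat{\tilde{A}(|\cdot|)}$ of $A_\epsilon$ is non-negative. Standard mollification theory gives $A_\epsilon\to A$ in $L^1(\O\times\O)$.

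Next, I would apply Propositions~\ref{prop:VAbound} and~\ref{prop:VAcoercive} to each $A_\epsilon$. A brief inspection of their proofs shows that only positive \emph{semi}definiteness is actually used (Cauchy--Schwarz holds for positive semidefinite bilinear forms, and the spectral eigenvalue expansion works unchanged with $\lambda_k\ge 0$; likewise Lemma~\ref{korn} only invokes semidefiniteness via $\int\int A_\epsilon \diver_x u\,\diver_{x'}u\ge 0$). Hence for every $u,v\in C_c^\infty(\O;\Rn)$,
\[
  |a_\epsilon(u,v)|\le \overline{C}\,(u,u)_{A_\epsilon}^{1/2}(v,v)_{A_\epsilon}^{1/2},\qquad a_\epsilon(u,u)\ge \tfrac{1}{2}\underline{C}\,(u,u)_{A_\epsilon}.
\]
Fixing $u,v\in C_c^\infty(\O;\Rn)$, the tensor fields $(x,x')\mapsto C\varepsilon_u(x):\varepsilon_v(x')$ and $\nabla u(x):\nabla v(x')$ are bounded on $\overline\O\times\overline\O$, so the $L^1$ convergence $A_\epsilon\to A$ immediately yields $a_\epsilon(u,v)\to a(u,v)$ and $(u,v)_{A_\epsilon}\to (u,v)_A$. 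Passing to the limit in the two inequalities establishes the bounds on $C_c^\infty(\O;\Rn)$.

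Finally, the boundedness estimate allows $a(\cdot,\cdot)$ to be extended by continuity from $C_c^\infty(\O;\Rn)$ to a bounded bilinear form on $V_A\times V_A$, and the coercivity estimate extends by density as well. The main obstacle is the construction in Step~1: all of the required structural properties must survive regularization \emph{simultaneously}, and the crucial ingredient for preservation of (semi)definiteness is the choice of a mollifier with non-negative Fourier transform, which standard $C_c^\infty$ bumps do not have. Once this choice is in place, the remaining work--translating $L^2$-results for $A_\epsilon$ into $L^1$-results for $A$--is essentially free, because the test tensors against which $A_\epsilon\to A$ in $L^1$ is tested come from smooth compactly supported displacements and are therefore bounded.
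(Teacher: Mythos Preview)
Your overall strategy---mollify $A$ into $L^2$ kernels, apply Propositions~\ref{prop:VAbound} and~\ref{prop:VAcoercive}, and pass to the limit on fixed $u,v\in C_c^\infty(\O;\Rn)$---is exactly the paper's. Your observation that only positive \emph{semi}definiteness of $A_\epsilon$ is needed in those propositions is also correct and simplifies matters.

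The gap is in your justification of that semidefiniteness. You invoke Bochner to conclude $\widehat{\tilde A(|\cdot|)}\ge 0$ from~\eqref{pd}, but \eqref{pd} is a \emph{local} condition (test functions supported in $\O$ only) and does not force the Fourier transform of the zero-extension of $\tilde A(|\cdot|)$ to be non-negative. A one-dimensional counterexample: with $\O=(-\tfrac12,\tfrac12)$ take $\tilde A(d)=\cos(\pi d)+\delta(1-d)_+$ on $[0,1]$; the resulting kernel is strictly positive definite on $\O$ (the cosine part is a rank-two positive kernel, the triangular part is strictly PD), yet for small $\delta$ the Fourier transform of the zero-extension is negative near $\xi=5/4$. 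So the Bochner route fails in general.

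What rescues your construction is your specific choice $\rho=\eta*\eta$ with $\eta$ even and compactly supported---but for a different reason than the one you give. Since convolutions commute, $\rho_\epsilon*\tilde A(|\cdot|)=\eta_\epsilon*\tilde A(|\cdot|)*\eta_\epsilon$, so at the operator level $T_{A_\epsilon}=T_{\eta_\epsilon}T_A T_{\eta_\epsilon}$ and hence
\[
  \int_\O\int_\O A_\epsilon(x,x')\phi(x)\phi(x')\,\mathrm d x'\,\mathrm d x
  =\int_\O\int_\O A(z,z')\,(\eta_\epsilon*\phi)(z)\,(\eta_\epsilon*\phi)(z')\,\mathrm d z'\,\mathrm d z\ge 0
\]
by~\eqref{pd}, provided $\epsilon$ is small enough that $\eta_\epsilon*\phi$ is still supported in $\O$. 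This is precisely the paper's mechanism: it mollifies by the ``sandwich'' $A_k(x,x')=\int_\O\int_\O\rho_{\epsilon_k}(|x-z|)A(z,z')\rho_{\epsilon_k}(|x'-z'|)\,\mathrm d z\,\mathrm d z'$ with a compactly supported strictly PD radial $\rho$, fixes $u,v$ and an intermediate domain $\tilde\O\subset\subset\O$ to ensure the mollified test functions stay inside $\O$, and then reads off (strict) positive definiteness of $A_k$ on $\tilde\O$ directly from~\eqref{pd}. Replace your Bochner sentence with this argument and your proof goes through.
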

\begin{proof}
  Note that both inequalities established in Propositions~\ref{prop:VAbound}
  and~\ref{prop:VAcoercive} are continuous with respect to the kernel \(A\).
  Namely, let us consider an arbitrary but fixed \(u,v \in C^\infty_c(\O;\Rn)\),
  and let \(\tilde{\O}\) be an open set containing \(\supp(u) \cup \supp(v)\),
  which is itself contained inside some compact set \(K \subset \O\),
  that is, \(\tilde{\O}\subset\subset\O\).
  We will construct a sequence of square integrable
  kernels~\(A_k \in L^2(\tilde{\O}\times\tilde{\O})\), each satisfying the assumptions
  of Propositions~\ref{prop:VAbound} and~\ref{prop:VAcoercive}.
  Furthermore, this sequence will converge strongly in \(L^1(\tilde{\O}\times\tilde{\O})\) towards \(A\).
  By considering the limits of the inequalities established in Propositions~\ref{prop:VAbound}
  and~\ref{prop:VAcoercive} we obtain the inequalities
  \begin{equation*}
    \frac{1}{2} \underline{C} (u,u)_A \le a(u,u),
    \quad\text{and}\quad
    a(u,v) \le \overline{C} \|u\|_A\|v\|_A,
  \end{equation*}
  and thereby prove the claim owing to the density of \(C^\infty_c(\O;\Rn)\) in \(V_A\).

  The announced sequence of kernels will be obtained by mollifying \(A\) with strictly positive definite
  kernels satisfying certain smoothness and symmetry requirements, which are necessary for applying
  the non-local Korn's inequality, see Proposition~\ref{korn}.

  Let \(\rho: \R \to\R_+\) be a compactly supported positive function, such that
  the resulting convolution kernel \(\Rn\times\Rn \ni (x,x')\mapsto \rho(|x-x'|) \in \R\)
  is a strictly positive definite mollifying kernel of class \(C^1_c(\Rn\times\Rn)\).
  For specific examples of such functions see for example~\cite{wendland1995a,buhmann2001a}.
  For a sequence of \(\epsilon_k \to 0\) we consider a sequence of mollified kernels
  \begin{equation*}
    A_k(x,x')=\int_\O \int_\O \rho_{\epsilon_k}(|x-z|)A(z,z')\rho_{\epsilon_k}(|x'-z'|)\,\mathrm{d}z\,\mathrm{d}z',
  \end{equation*}
  where \(\rho_{\epsilon}(d) = \epsilon^{-n}\rho(d/\epsilon)\).
  Owing to the construction, \(A_k \to A\), strongly in \(L^1(\O\times \O)\).
  Furthermore, each kernel \(A_k\) is symmetric and is in \(C^1(\Rn\times\Rn)\)
  with compact support; in particular it is in \(L^2(\O\times\O)\).

  The strict positive definiteness of \(A_k\) in \(\tilde{\O}\),
  follows from that of \(A\) and \(\rho\) as follows.
  For an arbitrary \(\phi \in C^\infty_c(\tilde{\O})\) let us put
  \begin{equation*}
    \phi_{\epsilon_k}(z)
    = \int_\O \rho_{\epsilon_k}(|z-x|)\phi(x)\,\mathrm{d}x
    = \int_\Rn \rho_{\epsilon_k}(|z-x|)\phi(x)\,\mathrm{d}x
    = (\rho_{\epsilon_k}*\phi)(z) \in C^\infty_c(\Rn),
  \end{equation*}
  where as before we extend \(\phi\) by zero outside \(\O\), and the inclusion
  is owing to the fact that both functions have compact support and
  \(\phi\) is smooth.
  For a fixed \(K\) determined by \(\tilde{\O}\) and not \(\phi\),
  it is always possible to select \(k_0\in\N\) large enough such that
  for all \(k\geq k_0\) we have the inclusion
  \(\supp(\phi_{\epsilon_k}) \subset\subset \Omega\), independently from
  \(\phi \in C^\infty_c(\tilde{\O})\).
  Consequently \(\phi_{\epsilon_k} \in C^\infty_c(\O)\).
  Finally, for all \(k\geq k_0\) we can write
  \begin{equation}\label{eq:pd_l1}
    \int_\O\int_\O A_k(x,x')\phi(x)\phi(x')\,\mathrm{d}x'\,\mathrm{d}x
    =
    \int_\O\int_\O A(z,z')
    \phi_{\epsilon_k}(z)
    \phi_{\epsilon_k}(z')\,\mathrm{d}z'\,\mathrm{d}z
    \ge 0,
  \end{equation}
  with equality only when \(\phi_{\epsilon_k}=0\) in \(\O\),
  owing to the strict positive definiteness of \(A\) in \(\O\).
  The equality \(\phi_{\epsilon_k}=0\) in turn leads to the equality
  \begin{equation*}
    0 =
    \int_\O \phi(z) \phi_{\epsilon_k}(z)\,\mathrm{d}z
    =\int_\Rn \int_\Rn \rho_{\epsilon_k}(|x-z|)\phi(x)\phi(z)\,\mathrm{d}x\,\mathrm{d}z,
  \end{equation*}
  which in view of strict positive definiteness of \(\rho_{\epsilon_k}\) in \(\Rn\)
  implies that \(\phi = 0\) in \(\Rn\).
  Therefore, the strict positive definiteness of \(A_k\) in \(\tilde{\O}\) is
  established.

  Finally, it remains to verify the differentiability of \(A_k\) and conditions
  on the derivatives, which are needed for the application of Proposition~\ref{korn}.
  The differentiability follows from that of \(\rho\), and directly from the
  construction of \(A_k\) and the symmetry of \(A\) we get the desired condition
  \(\partial_{x_k}A = \partial_{x'_k}A\).
\end{proof}

%------------------------------------------------------------------------------
\subsection{Riesz potential and existence of solutions in \(H^{s}_0(\O;\Rn)\)}
\label{subsec:Riesz}

Whereas Example~\ref{ex:exist_l2} is mathematically very satisfying, in terms of
mechanical modelling it is much less so.
Indeed, fulfilling the prescribed Dirichlet boundary conditions may be viewed
as a very basic requirement for a mathematical model of an elastic body.
The fundamental issue is that the function space, in which the existence of
solutions has been established, does not allow a definition of \emph{trace},
which mathematically encapsulates the concept of boundary conditions.
On the other hand, we cannot expect solutions in the ``very regular'' space
\(H^1_0(\O;\Rn)\), as has been discussed in Subsection \ref{subsect:noexist_h1}.
However, we can still obtain solutions in some intermediate spaces between
\(L^2(\O;\Rn)\) and \(H^1_0(\O;\Rn)\), namely the fractional Sobolev spaces
\(H^s_0(\O;\Rn)\), \(0 < s < 1 \), see~\cite{di2012hitchhikers}.

We recall that for \(0<s<1\) the fractional Sobolev space
\begin{equation*}
  H^s(\O;\Rn) = \{\, v \in L^2(\O;\Rn) \mid
  \frac{|v(x)-v(x')|^2}{|x-x'|^{n+2s}} \in L^2(\O\times\O;\Rn)\,\},
\end{equation*}
is a Hilbert space with respect to the inner product
\((u,v)_{H^s(\O;\Rn)} = (u,v)_{L^2(\O;\Rn)} + [u,v]_{H^s(\O;\Rn)}\),
where the symmetric positive semi-definite bilinear form
\begin{equation*}
  [u,v]_{H^s(\O;\Rn)} = \int_\O \int_\O
  \frac{(u(x)-u(x'))\cdot(v(x)-v(x'))}{|x-x'|^{n+2s}}
  \,\mathrm{d}x'\,\mathrm{d}x,
\end{equation*}
generates the Gagliardo seminorm on \(H^s(\O;\Rn)\) via
\(v\mapsto [v,v]_{H^s(\O;\Rn)}^{1/2}\).
Finally, \(H^s_0(\O;\Rn)\) is defined as the closure of \(C^\infty_c(\O;\Rn)\)
in \(H^s(\O;\Rn)\).

For \(0< \alpha < n\) we put the kernel-defining function
\(\tilde{A}_{\alpha}(d) = c_\alpha^{-1} d^{\alpha-n}\), where the normalization
constant \(c_\alpha = \pi^{n/2}2^\alpha \Gamma(\alpha/2)/\Gamma((n-\alpha)/2)\).
This interaction kernel defines Riesz potential~\cite{landkof1972foundations}.
The \(n\)-dimensional Fourier transform of this function is
\(\mathcal{F}\{\tilde{A}_{\alpha}(|\cdot|)\}=|2\pi\xi|^{-\alpha}\), thereby
intimately linking it to the fractional Laplace operator~\cite{di2012hitchhikers}.

\begin{proposition}\label{prop:isomorph_Hs0}
  For \(\alpha \in ((0,2]\cap (0,n)) \setminus \{1\}\),
  \(V_A=H^s_0(\O;\Rn)\) with \(s = 1-\alpha/2 \in [0,1)\setminus\{1/2\}\),
  in the sense that the two spaces are continuously embedded into each other.
\end{proposition}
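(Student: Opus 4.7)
The plan is to use Fourier analysis to identify the $V_A$-norm with a Fourier-side expression and match it against the standard Fourier characterization of $H^s_0(\O;\Rn)$. For $u \in C_c^\infty(\O;\Rn)$ extended by zero to $\Rn$, the component-wise Plancherel calculation displayed in Example~\ref{ex:noexist}, together with the well-known identity $\mathcal{F}\{\tilde{A}_\alpha(|\cdot|)\}(\xi) = |2\pi\xi|^{-\alpha}$, gives
\begin{equation*}
\|u\|_A^2 = \int_{\Rn} |2\pi\xi|^{-\alpha}\,|\mathcal{F}\{\nabla u\}(\xi)|^2\,\mathrm{d}\xi = (2\pi)^{2-\alpha}\int_{\Rn} |\xi|^{2s}\,|\hat{u}(\xi)|^2\,\mathrm{d}\xi,
\end{equation*}
with $s = 1-\alpha/2 \in [0,1)\setminus\{1/2\}$.

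Next, for $s \in (0,1)$ the Gagliardo seminorm on $H^s(\Rn;\Rn)$ is known to be equivalent to $(\int_{\Rn}|\xi|^{2s}|\hat{v}|^2\,\mathrm{d}\xi)^{1/2}$ up to a constant $C_{n,s}$ (see~\cite{di2012hitchhikers}), so $\|u\|_A$ is equivalent to $[u]_{H^s(\Rn;\Rn)}$ on $C_c^\infty(\O;\Rn)$. For the boundary case $s=0$ (that is, $\alpha=2$, forcing $n=3$), Plancherel directly gives $\|u\|_A = 2\pi\|u\|_{L^2(\O;\Rn)}$, while $H^0_0(\O;\Rn) = L^2(\O;\Rn)$ by definition, so that case is finished immediately. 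It therefore remains, for $s\in(0,1)\setminus\{1/2\}$, to bridge $[u]_{H^s(\Rn;\Rn)}$ and $\|u\|_{H^s_0(\O;\Rn)}$ on $C_c^\infty(\O;\Rn)$.

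Splitting the full-space double integral as
\begin{equation*}
[u]_{H^s(\Rn;\Rn)}^2 = [u]_{H^s(\O;\Rn)}^2 + 2\int_\O |u(x)|^2\bigg(\int_{\Rn\setminus\O}\frac{\mathrm{d}y}{|x-y|^{n+2s}}\bigg)\,\mathrm{d}x,
\end{equation*}
the inner integral is pinched between constant multiples of $\operatorname{dist}(x,\partial\O)^{-2s}$. The fractional Hardy inequality on bounded Lipschitz domains, valid precisely for $s\in[0,1)\setminus\{1/2\}$, controls $\int_\O |u(x)|^2\operatorname{dist}(x,\partial\O)^{-2s}\,\mathrm{d}x$ by a constant times $\|u\|_{H^s_0(\O;\Rn)}^2$; combined with the trivial bound $[u]_{H^s(\O;\Rn)} \le [u]_{H^s(\Rn;\Rn)}$ and with a fractional Poincar\'e inequality (which, for bounded $\O$ and $s>0$, absorbs the $L^2$ part of the $H^s_0$ norm into the seminorm), this yields $\|u\|_A \asymp \|u\|_{H^s_0(\O;\Rn)}$ on $C_c^\infty(\O;\Rn)$. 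Passing to completions then delivers the continuous embeddings in both directions. The main obstacle is precisely this tail estimate, and its failure at $s=1/2$ is exactly why that exponent must be excluded: the fractional Hardy inequality breaks down at the critical exponent and extension by zero ceases to be bounded from $H^{1/2}(\O)$ into $H^{1/2}(\Rn)$, so $V_A$ and $H^{1/2}_0(\O;\Rn)$ would no longer coincide.
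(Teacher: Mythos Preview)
Your argument is correct and shares its starting point with the paper: both compute, via Plancherel and the Fourier transform of the Riesz kernel, that \(\|u\|_A^2\) equals a constant multiple of \([u]_{H^s(\Rn;\Rn)}^2\) for \(u\in C_c^\infty(\O;\Rn)\), and both then invoke a fractional Friedrichs/Poincar\'e inequality to control \(\|u\|_{L^2}\) by \([u]_{H^s(\Rn)}\), giving \(V_A\hookrightarrow H^s_0(\O;\Rn)\). The difference lies in the reverse embedding. The paper observes that \([u]_{H^s(\Rn)}\le \|u\|_{H^s(\Rn)}\), which immediately gives \(\tilde H^s(\O;\Rn)\hookrightarrow V_A\), and then quotes \cite[Theorem~3.33]{mclean2000strongly} for the isomorphism \(\tilde H^s(\O;\Rn)\cong H^s_0(\O;\Rn)\) when \(s\neq 1/2\). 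You instead unpack that isomorphism by hand: splitting \([u]_{H^s(\Rn)}^2=[u]_{H^s(\O)}^2+\text{(tail)}\), bounding the tail by \(\int_\O|u|^2\operatorname{dist}(x,\partial\O)^{-2s}\,\mathrm{d}x\), and invoking the fractional Hardy inequality. This is a legitimate and well-known alternative proof of the boundedness of extension by zero, and it has the virtue of making explicit \emph{why} \(s=1/2\) must be excluded (Hardy fails there), whereas the paper simply inherits the exclusion from the cited theorem. The cost is that you rely on a result (fractional Hardy on Lipschitz domains for \(s\neq 1/2\)) that is itself nontrivial and comes in two slightly different flavours for \(s<1/2\) and \(s>1/2\); the paper's citation is more economical. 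Note also that the lower bound on the inner integral (the ``pinched'' claim) is never actually used in your argument---only the upper bound matters.
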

\begin{proof}
  For any \(u \in C^\infty_c(\Omega)\), and \(\alpha\), \(s\) as defined above
  we have
  \begin{equation}\label{eq:FourierRiesz}
    \begin{aligned}
      (u,u)_A &=
      \int_\Rn |2\pi\xi|^{-\alpha} |\mathcal{F}\{\nabla u\}(\xi)|^2\,\mathrm{d}\xi
      =
      \int_\Rn |2\pi\xi|^{2-\alpha} |\mathcal{F}\{u\}(\xi)|^2\,\mathrm{d}\xi
      = \frac{c(n,s)}{2}[u,u]_{H^s(\Rn;\Rn)},
    \end{aligned}
  \end{equation}
  where the first equality is owing to~\cite[Lemma~2, pp. 117]{stein}, and
  the last equality is owing to~\cite[Proposition~3.4]{di2012hitchhikers}
  with \(c(n,s)^{-1}=\int_{\Rn} (1-\cos(\zeta_1))|\zeta|^{-2s-n}\,\mathrm{d}\zeta\).
  (As before, we implicitly extend by \(0\) the functions outside their domain
  of definition).

  Fractional Friedrichs's inequality~\cite[Corollary~3.3.6]{webb2012analysis}
  (see also~\cite{ervin2006variational,ervin2007variational}) yields:
  \begin{equation*}
    \|u\|^2_{H^s(\Rn;\Rn)} \le
    (1+\diam(\O)^{2s}(\Gamma(1+s))^{-2})[u,u]_{H^s(\Rn;\Rn)},
  \end{equation*}
  and ultimately  \(\|u\|^2_{H^s(\O;\Rn)}\le\|u\|^2_{H^s(\Rn;\Rn)}\).
  Therefore \(V_A\) is continuously embedded into \(H^s_0(\O;\Rn)\).

  On the other hand, the right hand side of~\eqref{eq:FourierRiesz} is majorized
  by \(\tfrac{1}{2}c(n,s)\|u\|^2_{H^s(\Rn;\Rn)}\), and therefore
  the closure of \(C^\infty_c(\O;\Rn)\) in \(H^s(\Rn;\Rn)\), often denoted
  by \(\tilde{H}^s(\O;\Rn)\), is continuously embedded into \(V_A\).
  According to~\cite[Theorem~3.33]{mclean2000strongly},
  \(\tilde{H}^s(\O;\Rn)\) and \(H^s_0(\O;\Rn)\) are isomorphic provided that
  \(s\neq 1/2\), the case which we specifically exclude.
\end{proof}

\begin{remark}
  Whenever \(s>1/2\) in
  Proposition~\ref{prop:isomorph_Hs0}, that is, when \(0<\alpha < 1\),
  functions in \(V_A\) have a well defined concept of trace,
  see~\cite[Theorem~3.38]{mclean2000strongly}.
  In fact, if \(0\le s < 1/2\) then \(V_A = H^s(\O;\Rn)\),
  whereas for \(1/2 < s \le 1\) it holds that
  \(V_A = \{\, v \in H^s(\O;\Rn)\mid v|_{\partial\O} = 0\,\}\),
  see~\cite[Theorem~3.40]{mclean2000strongly}.
\end{remark}

After this preparatory work we are ready to state the existence result.

\begin{theorem}\label{thm:exist_Hs0}
  For \(\alpha\) and \(s\) as in Proposition~\ref{prop:isomorph_Hs0}
  the problem~\eqref{wf} with homogeneous Dirichlet boundary conditions
  (that is, \(\Gamma_D=\partial\O\) and \(\bar u=0\)) admits a unique solution
  in \(H^s_0(\O;\Rn)\).
\end{theorem}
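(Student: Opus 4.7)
The approach is to apply the Lax--Milgram lemma (Theorem~\ref{thm:LM}) to the bilinear form $a(\cdot,\cdot)$ and the load functional $\ell(\cdot)$ on the Hilbert space $V_A$, which by Proposition~\ref{prop:isomorph_Hs0} is identified with $H^s_0(\O;\Rn)$ through continuous embeddings in both directions (so the two norms are equivalent). Since $\Gamma_D=\partial\O$ and $\bar u=0$, the trial and test space in~\eqref{wf} reduces exactly to $V_A$, so three items remain to be verified: (i) $V_A$ is a Hilbert space, which holds by construction; (ii) $a(\cdot,\cdot)$ is bounded and coercive on $V_A$; and (iii) $\ell$ is a bounded linear functional on $V_A$.

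For (ii), I would invoke Theorem~\ref{thm:L1_bnd_coerc}, for which I must check that $A(x,x')=c_\alpha^{-1}|x-x'|^{\alpha-n}$ is symmetric, lies in $L^1(\O\times\O)$, and is strictly positive definite. Symmetry is obvious. The $L^1$-integrability reduces via Fubini and polar coordinates to the integrability of $r^{\alpha-1}$ near the origin, which holds because $\alpha>0$. For strict positive definiteness, I extend $\phi \in C_c^\infty(\O)\setminus\{0\}$ by zero to $\Rn$ and apply Parseval together with the Riesz identity $\mathcal{F}\{\tilde A_\alpha(|\cdot|)\}(\xi)=|2\pi\xi|^{-\alpha}$ to obtain
\[
\int_\O\int_\O A(x,x')\phi(x)\phi(x')\,\mathrm{d}x'\,\mathrm{d}x
= \int_\Rn |2\pi\xi|^{-\alpha} |\mathcal{F}\{\phi\}(\xi)|^2\,\mathrm{d}\xi,
\]
which is strictly positive because $\mathcal{F}\{\phi\}$, being the Fourier transform of a non-trivial compactly supported smooth function, extends to an entire function by Paley--Wiener and therefore cannot vanish on a set of positive Lebesgue measure.

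For (iii), in the Dirichlet setting $\ell(v)=\int_\O f(x)\cdot v(x)\,\mathrm{d}x$ with $f\in L^2(\O;\Rn)$. Cauchy--Schwarz together with the continuous embedding $V_A\hookrightarrow H^s_0(\O;\Rn)\hookrightarrow L^2(\O;\Rn)$ (the second being standard for fractional Sobolev spaces on bounded domains) gives $|\ell(v)|\le \|f\|_{L^2}\|v\|_{L^2}\le C\|f\|_{L^2}\|v\|_A$, so $\ell \in V_A'$. Lax--Milgram then delivers a unique $u\in V_A=H^s_0(\O;\Rn)$ solving~\eqref{wf}, together with the stability estimate $\|u\|_A\le \alpha^{-1}\|\ell\|_{V_A'}$.

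The main substantive step is the verification of strict positive definiteness of the Riesz kernel restricted to test functions on $\O\subset\Rn$ (as opposed to on all of $\Rn$, where the Fourier multiplier formula gives it for free) -- the Paley--Wiener argument bridges this gap. Every other item either is immediate or reduces to invoking the already-established Theorem~\ref{thm:L1_bnd_coerc} and Proposition~\ref{prop:isomorph_Hs0}, so the proof amounts almost entirely to collecting the previously assembled machinery.
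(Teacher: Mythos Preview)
Your proof is correct and follows essentially the same route as the paper's: apply Lax--Milgram on $V_A=H^s_0(\O;\Rn)$, citing Theorem~\ref{thm:L1_bnd_coerc} for boundedness and coercivity of $a(\cdot,\cdot)$ and the embedding $H^s_0\hookrightarrow L^2$ for boundedness of $\ell$. You go somewhat further than the paper by explicitly checking the hypotheses of Theorem~\ref{thm:L1_bnd_coerc} for the Riesz kernel (the $L^1$ bound and strict positive definiteness via the Fourier multiplier and Paley--Wiener), which the paper leaves implicit.
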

\begin{proof}
  We apply Theorem~\ref{thm:LM}.
  Boundedness of \(\ell(\cdot) = (f,\cdot)_{L^2(\O;\Rn)}\) on
  \(V_A = H^s_0(\O;\Rn)\) (cf.~Proposition~\ref{prop:isomorph_Hs0}) for
  any \(f \in L^2(\O;\Rn)\) is a consequence of the
  fact that \(H^s_0(\O;\Rn)\) is continuously embedded into \(L^2(\O;\Rn)\),
  which follows immediately from the trivial inequality
  \(\|\cdot\|_{L^2(\O;\Rn)} \le \|\cdot\|_{H^s(\O;\Rn)}\),
  see the definition of the fractional Sobolev norm.
  The coercivity and the boundedness of \(a(\cdot,\cdot)\) on \(V_A\)
  are established in Theorem~\ref{thm:L1_bnd_coerc}.
\end{proof}

\begin{example}\label{ex:exist_Hs0}
  Consider the problem~\eqref{wf} with \(n=1\), \(\O=(0,1)\), \(C=1\),
  and \(\alpha=2/3\).
  The resulting \(\tilde{A}\) is shown in Figure~\ref{fig:1}.

  Proposition~\ref{prop:isomorph_Hs0} implies that the bilinear form
  \(a(\cdot,\cdot)\) is coercive and continuous with respect to
  the inner product on \(H^{s}_0(\O)\) with \(s = 1-\alpha/2 = 2/3\).
  We would like to verify numerically, that this is indeed the case.
  To this end, we use a finite element discretization with piecewise-linear
  elements on a uniform grid
  to assemble the non-local stiffness matrix \(K\) as was done in
  Example~\ref{ex:noexist}.
  For the current kernel we need to evaluate the double integrals of the
  Riesz kernel against constants (derivatives of the piecewise-linear basis
  functions inside each element), which can be easily done analytically.
  Instead of assembling the non-local mass matrix directly from the definition
  of the inner product in \(H^{s}_0(\O)\), we use an equivalent simpler
  construction described in~\cite{arioli2009discrete}.
  Namely, we assemble two Gramm matrices \(M_0\) and \(M_1\) corresponding
  to \(L^2(\O)\) and \(H^1_0(\O)\) inner products, respectively,
  and then put \(M=M_0 (M_0^{-1}M_1)^{s}\).
  We then compute the smallest and the largest eigenvalues
  \(\lambda_{\min}\) and \(\lambda_{\max}\) corresponding to the
  generalized eigenvalue problem \(K\boldsymbol{v} = \lambda M\boldsymbol{v}\),
  characterizing respectively the coercivity and the boundedness of
  \(a(\cdot,\cdot)\) with respect to
  the inner product on \(H^{s}_0(\O)\); see Example~\ref{ex:noexist}.
  The behaviour of the resulting eigenvalues as a function of the
  element size is shown in Figure~\ref{fig:4}.

  \begin{figure}
    \centering
    \includegraphics[width=0.45\columnwidth]{./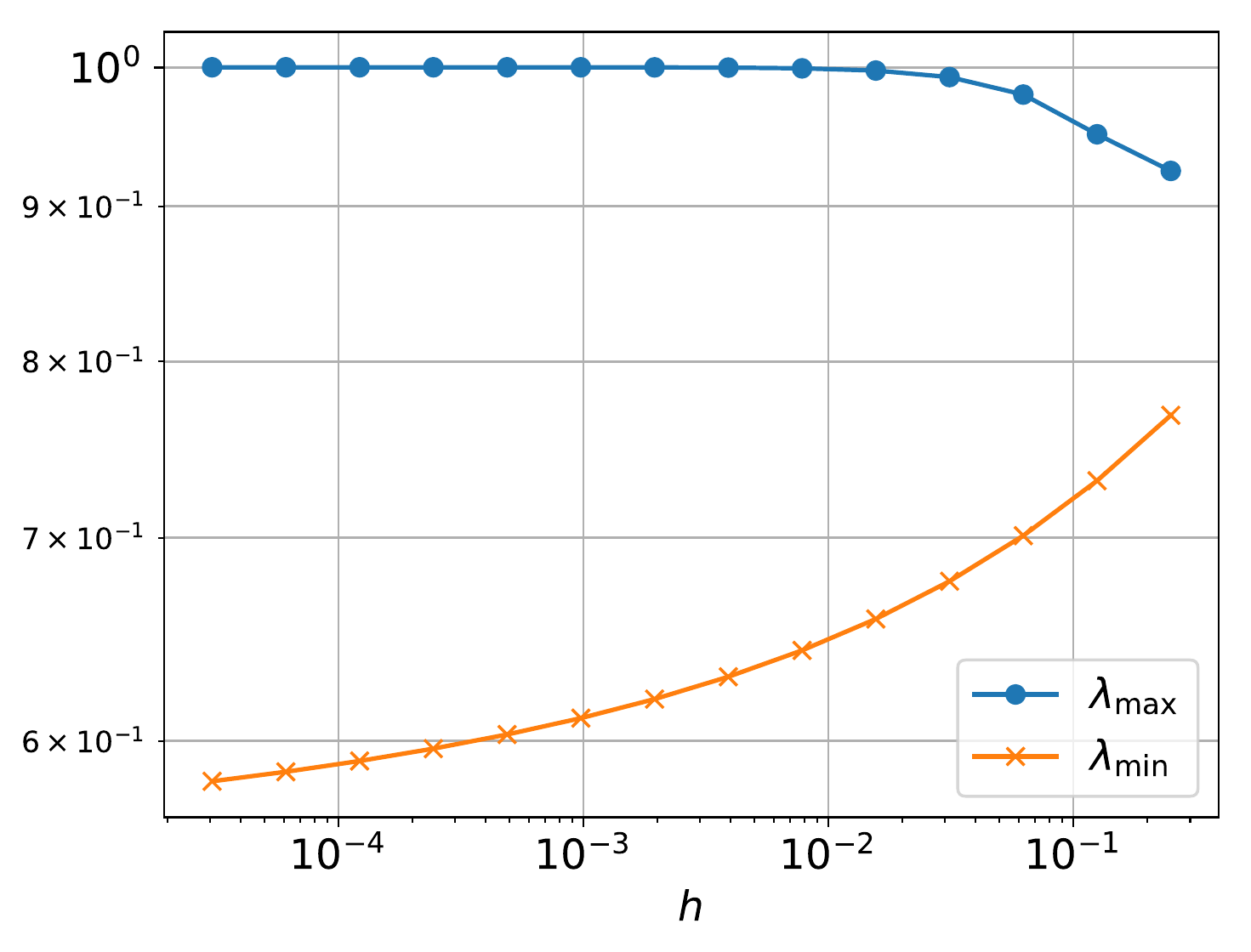}
    \caption{Behaviour of the generalized eigenvalues \(\lambda_{\min}\),
    and \(\lambda_{\max}\), see Example~\ref{ex:exist_Hs0}}.
    \label{fig:4}
  \end{figure}

  From~\eqref{eq:FourierRiesz} and polarization identity it follows that
  for \(u,v \in C^\infty_c(\O)\) we have the equality
  \(a(u,v) = (u,v)_A = \tfrac{1}{2}c(n,s)[u,v]_{H^{s}(\R)} =
  ((-\Delta)^{s}u,v)_{L^2(\R)}\).
  Therefore, Eringen problem~\eqref{wf} in this case reduces to
  that of solving the fractional Laplace problem in a bounded domain.
  For simple cases, it is possible to compute analytical solutions,
  see~\cite{di2012comparison,d2013fractional}.
  Namely, in our situation if \(f = 1\) then the analytical solution is
  \begin{equation*}
    u(x) = \frac{2^{-2s}\Gamma(n/2)}{\Gamma((n+2s)/2)\Gamma(1+s)} x^{s}(1-x)^{s}.
  \end{equation*}

  \begin{figure}
    \centering
    \begin{tabular}{cc}
      \includegraphics[width=0.43\columnwidth]{./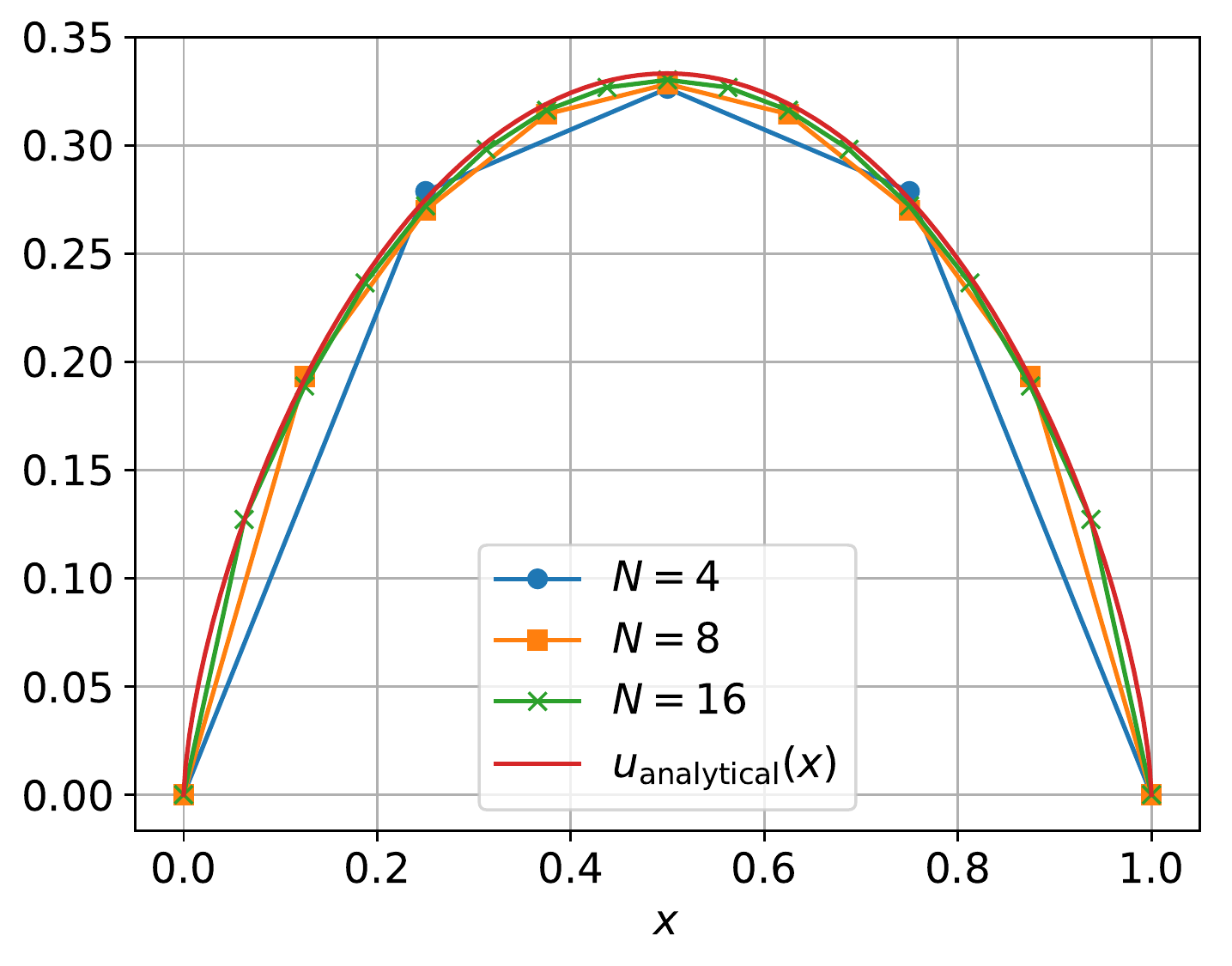} &
      \includegraphics[width=0.45\columnwidth]{./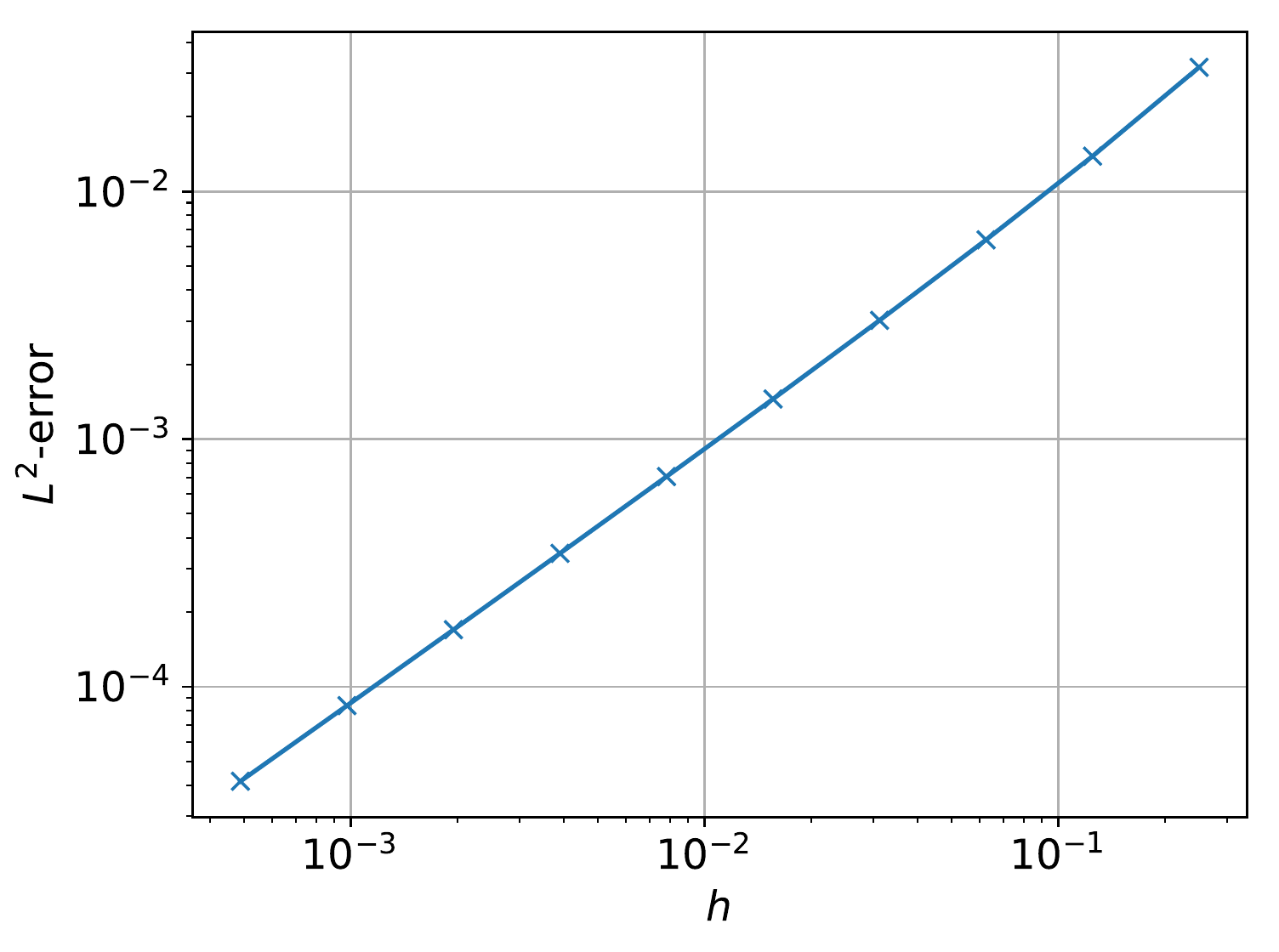}\\
      (a) & (b)
    \end{tabular}
    \caption{%
    (a): Numerical and analytical solutions to~\eqref{wf}
         with Riesz potential (hence also fractional Laplace problem)
         on some small mesh sizes, see Example~\ref{ex:exist_Hs0};
    (b): \(L^2(\O)\)-error between the finite element and
         analytical solution to~\eqref{wf} with Riesz potential kernel,
         see Example~\ref{ex:exist_Hs0}%
    }
    \label{fig:5}
  \end{figure}
  % \begin{figure}
  %   \centering
  %   \includegraphics[width=0.45\columnwidth]{./figure_06.pdf}
  %   \caption{\(L^2(\O)\)-error between the finite element and
  %   analytical solution to~\eqref{wf} with Riesz potential kernel,
  %   see Example~\ref{ex:exist_Hs0}}.
  %   \label{fig:6}
  % \end{figure}

  Figure~\ref{fig:5}~(a) shows the analytical and the numerical solutions for a
  few mesh sizes.
  Figure~\ref{fig:5}~(b) shows that the convergence rate with respect to
  \(L^2(\O)\) norm between the two solution is approximately linear
  (we estimate it at \(h^{1.06}\)), which is not unreasonable given the
  very low regularity of the analytical solution.
\end{example}

%------------------------------------------------------------------------------
\subsection{Local-nonlocal mixed model and existence of solutions in \(H^{1}_0(\O;\Rn)\)}
\label{subsec:nlmix}

As a possible remedy to the ill-posedness of the Eringen's model, a so-called
local-nonlocal mixture model proposed by Eringen~\cite{eringen} is sometimes
utilized in the literature.
In this model one postulates that the strain-stress relation is given by the law
\begin{equation}\label{ss}
\sigma(x) =mC \varepsilon(x)+(1-m)  \displaystyle\int_{ \O} A(x,x')C\varepsilon (x')\,dx',
\end{equation}
where \(0< m <1 \) is a given weighting fraction between the local and the
nonlocal constitutive relations (see~\cite{pofuspi04} and the references therein).
Because of this construction as a convex combination of the local  and non-local laws,
arguably this model cannot be classified as a genuinely nonlocal model.
Nevertheless, it appears to be of interest for certain applications, for example
those involving modelling of nanotubes (see~\cite{romano} and the references therein).
For the sake of completeness of our investigation on existence of solutions to
Eringen's integral models, in this section we give a general existence result for
the mixture model.
The proof of such a result is elementary and straightforward, although as far as
the authors are aware it has not appeared in the literature.
Following the notation of Section 2, the local-nonlocal mixture model is:
\begin{equation}\label{ne2}
\left\{\begin{alignedat}{-1}
  -\diver(\sigma)&=f, && \text{in \(\O\)},\\
  \varepsilon &=\frac{1}{2}[\nabla u+(\nabla u)^\mathrm{T}], && \text{in \(\O\)},\\
  \sigma &=  mC\varepsilon    +  (1-m)\int_{\O} A(x,x')C \varepsilon(x')\,\mathrm{d}x',
  && \text{in \(\O\)},\\
\sigma \cdot \hat{n}&=g, && \text{on \(\Gamma_N\)},\\
u&=\bar{u}, &&\text{on \(\Gamma_D\)},\\
\end{alignedat}\right.
\end{equation}

Weak formulation of it can be stated as follows: find \(u\in H^1(\O;\Rn)\) with \(u=\bar{u}\)
on \(\Gamma_D\), such that
\begin{equation}\label{wf2}
  b(u,v)=l(v), \quad \forall v\in V,
\end{equation}
where \(V=\{ v\in H^1(\O;\R^n)\;:\; v=0 \text{\ on\ }\Gamma_D\},\)
and
\begin{equation*}\begin{split}
b(u,v)& =m\int_\O C\varepsilon_u(x):\varepsilon_v(x)\,\mathrm{d}x  +(1-m) a(u,v),
\end{split}\end{equation*}
and \(a(\cdot,\cdot)\) and \(\ell(\cdot)\) are defined in~\eqref{eq:al}.

We can now state and prove a general existence result for this problem.

\begin{theorem}
  Assume that the interaction kernel \(A \in L^2(\O\times\O)\) is (not necessarily strictly) positive definite.
  Then for each \(f\in L^2(\Omega;\Rn)\), \(\bar{u}\in H^{1/2}(\Gamma_D;\Rn)\) and \(g\in L^2(\Gamma_N; \Rn)\),
  the problem~\eqref{wf2} admits a unique solution.
\end{theorem}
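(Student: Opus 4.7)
The plan is to reduce~\eqref{wf2} to a problem with homogeneous Dirichlet data and apply Lax--Milgram's lemma (Theorem~\ref{thm:LM}) on the Hilbert space \(V=\{v\in H^1(\O;\Rn) : v=0 \text{ on } \Gamma_D\}\) equipped with the usual \(H^1\) inner product. Using the trace theorem one picks a lifting \(u_0\in H^1(\O;\Rn)\) with \(u_0|_{\Gamma_D}=\bar u\), and then seeks \(w=u-u_0\in V\) solving
\[
b(w,v)=\tilde\ell(v):=\ell(v)-b(u_0,v),\quad \forall v\in V.
\]
The task is therefore to verify three conditions: boundedness of \(b\) on \(V\times V\), boundedness of \(\tilde\ell\) on \(V\), and coercivity of \(b\) on \(V\).

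Boundedness is routine. The local contribution to \(b\) is bounded on \(H^1(\O;\Rn)\times H^1(\O;\Rn)\) by \(m\overline{C}\|u\|_{H^1}\|v\|_{H^1}\) using~\eqref{eq:localcoercivity}, while the nonlocal contribution is bounded by \((1-m)\overline{C}\|A\|_{L^2(\O\times\O)}\|u\|_{H^1}\|v\|_{H^1}\) via exactly the Cauchy--Schwarz estimate appearing in the proof of Proposition~\ref{prop:compact_embedding_h1} (combined with~\eqref{eq:localcoercivity} to absorb \(C\)). Continuity of \(\ell\) follows from Cauchy--Schwarz for the volume term and the trace theorem \(H^1(\O;\Rn)\hookrightarrow L^2(\Gamma_N;\Rn)\) for the Neumann term; consequently \(\tilde\ell\) is bounded on \(V\).

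Coercivity of \(b\) is the principal point. Decomposing \(b(u,u)=m\int_\O C\varepsilon_u{:}\varepsilon_u\,\mathrm{d}x+(1-m)a(u,u)\), the local part is bounded below by \(m\underline{C}\,c_K\|u\|_{H^1(\O;\Rn)}^2\) by combining~\eqref{eq:localcoercivity} with the classical Korn--Poincar\'e inequality on \(V\) (valid since \(\Gamma_D\) is implicitly assumed to have positive surface measure). The remaining ingredient is the nonnegativity \(a(u,u)\ge0\) for every \(u\in V\), which is obtained by writing \(C=C^{1/2}C^{1/2}\) with \(C^{1/2}\) the symmetric positive square root of the stiffness tensor and decomposing
\[
a(u,u)=\sum_{i,j=1}^n\int_\O\int_\O A(x,x')\bigl(C^{1/2}\varepsilon_u(x)\bigr)_{ij}\bigl(C^{1/2}\varepsilon_u(x')\bigr)_{ij}\,\mathrm{d}x'\,\mathrm{d}x.
\]
Each of the \(n^2\) summands is nonnegative because the (nonstrict) positive definiteness~\eqref{pd} of \(A\) on \(C_c^\infty(\O)\) extends to all of \(L^2(\O)\) by density, using \(A\in L^2(\O\times\O)\) to ensure continuity of the quadratic form. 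Combining the two estimates gives \(b(u,u)\ge m\underline{C}\,c_K\|u\|_{H^1(\O;\Rn)}^2\), which is the desired coercivity.

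With these ingredients in place Lax--Milgram yields a unique \(w\in V\), and \(u=u_0+w\) is the unique solution to~\eqref{wf2}. The only genuine subtlety is justifying \(a(u,u)\ge0\) on all of \(V\) rather than merely on \(C_c^\infty(\O;\Rn)\); this is the step where the square-integrability hypothesis on \(A\) is actually used, while the rest of the proof is a straightforward combination of Korn's inequality with the convex-combination structure of \(b\).
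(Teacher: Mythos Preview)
Your proof is correct and follows essentially the same route as the paper: apply Lax--Milgram on \(V\subset H^1(\O;\Rn)\), obtaining coercivity from the classical Korn inequality on the local part plus nonnegativity of the nonlocal part (via positive definiteness of \(A\)), and boundedness from Cauchy--Schwarz and the trace theorem. You supply more detail than the paper does---notably the explicit lifting to homogeneous Dirichlet data and the \(C^{1/2}\) decomposition (already used in Proposition~\ref{prop:VAbound}) to justify \(a(u,u)\ge 0\) on \(V\)---but the argument is the same.
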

\begin{proof}
  The proof of this theorem is a straightforward application of Theorem~\ref{thm:LM} in
  the Hilbert space \(H^1(\O; \R^n)\).
  Coercivity of the bilinear form \(b(\cdot,\cdot)\) follows from coercivity of the first term
  (owing to the classical Korn's inequality) and non-negativity of the second term
  (owing to the positive definiteness of the kernel \(A\)).
  Boundedness of \(b(\cdot,\cdot)\) follows from a straightforward application of
  Cauchy--Schwartz inequality.
  Continuity of \(\ell\) is owing to the continuous inclusion of
  \(H^1(\O;\R^n)\) into \(L^2(\O;\R^n)\) and the continuity
  of the trace operator \(H^1(\O;\R^n) \mapsto L^2(\Gamma_N;\R^n)\)~\cite{brezis}.
\end{proof}

% !TEX root = EringenExistence.tex

\section{Extension of Eringen's model to heterogeneous materials}
\label{sec:heterog}

Assume now that the material stiffness tensor may vary spatially, that is,
that it is a bounded and measurable function \(x\mapsto C(x)\) satisfying
the bounds~\eqref{eq:localcoercivity} uniformly in \(x\).
The unfortunate consequence of this assumption is the fact that
the stiffness tensor and the non-local integral operator with kernel
\(A(\cdot,\cdot)\) no longer commute with each other and consequently
the bilinear form \(a(\cdot,\cdot)\) defined by~\eqref{eq:al} is no longer
symmetric.
Furthermore, we can no longer rely on the strict positive definiteness of
the kernel to infer the coercivity of the bilinear form \(a(\cdot,\cdot)\).

Whereas the symmetry of the bilinear norm can be easily recovered by e.g.\
substituting \([C(x)+C(x')]/2\) in place of \(C(x)\) in~\eqref{eq:al}, the
coercivity of \(a(\cdot,\cdot)\) is a much more delicate question.
One could for example try to utilize the knowledge that both the stiffness tensor and
the averaging operators are linear, self-adjoint, and positive definite and
therefore we can define a square root of each of these operators.
These ideas naturally lead to possible definitions
\begin{equation*}
  a(u,v) = \int_\O\int_\O A(x,x') \left(C^{1/2}(x)\varepsilon_u(x)\right):
  \left(C^{1/2}(x')\varepsilon_v(x')\right)\,\mathrm{d}x\,\mathrm{d}x'.
\end{equation*}
or
\begin{equation*}
  a(u,v) = \sum_{k,m=1}^\infty \int_\O \varphi_k(x)\varphi_m(x)C(x)
  \bigg[\int_\O \varepsilon_u(x')\lambda_k^{1/2}\varphi_k(x')\,\mathrm{d}x'\bigg]:
  \bigg[\int_\O \varepsilon_v(x'')\lambda_m^{1/2}\varphi_m(x'')\,\mathrm{d}x''\bigg]\,\mathrm{d}x,
\end{equation*}
where \(\lambda_k,\varphi_k\) are the eigenvalues and eigenfunctions of the
integral operator with the strictly positive definite kernel \(A\in L^2(\O\times\O))\)
provided by Theorem~\ref{thm:spectral}.
Note that while both of these definitions agree with~\eqref{eq:al} when
the stiffness tensor \(C\) is constant,
which, if any, of these mathematical constructions provides a useful model
of non-local elastic heterogeneous materials has to be assessed through a
rigorous model validation process, something which goes well beyond the scope
of this work or the expertise of the authors.
Nevertheless, one issue with these formulations is that there is no clear way
to get rid of the spatial dependent tensor \(C(x)\) in order to show coercivity
and boundedness of the bilinear form \(a(\cdot,\cdot)\), as in the proofs of
Propositions~\ref{prop:VAbound} and~\ref{prop:VAcoercive}.

With this disclaimer, we propose a different explicit model, which relies upon
algebraic properties of Riesz potentials.
Let us begin with the definition \(a(\cdot,\cdot)\) in~\eqref{eq:al}
and assume that \(C\) is constant.
Then for any \(u,v\in C_c^\infty(\O,\Rn)\) we can write:
\begin{equation}\label{eq:heterog_same_homog}
  \begin{aligned}
    a(u,v)
    &=
    \int_{\O}\int_{\O} \tilde{A}_{\alpha}(|x-x'|)
    C \varepsilon_u(x):\varepsilon_v(x')
    \,\mathrm{d}x'\,\mathrm{d}x
    \\&=
    \int_{\Rn} \int_{\Rn} \tilde{A}_{\alpha/2}(|x-x''|)
    \int_{\Rn} \tilde{A}_{\alpha/2}(|x''-x'|)
    C \varepsilon_u(x):\varepsilon_v(x')
    \,\mathrm{d}x'\,\mathrm{d}x''\,\mathrm{d}x
    \\&=
    \int_{\Rn} C
    \underbrace{\bigg[\int_{\O} \tilde{A}_{\alpha/2}(|x''-x|)\varepsilon_u(x)\,\mathrm{d}x\bigg]%
    }_{\varepsilon_u^{\text{nl}}(x'')}:
    \underbrace{\bigg[\int_{\O} \tilde{A}_{\alpha/2}(|x''-x'|)\varepsilon_v(x')\,\mathrm{d}x'\bigg]%
    }_{\varepsilon_v^{\text{nl}}(x'')}
    \,\mathrm{d}x'',
  \end{aligned}
\end{equation}
where we have used the semigroup property of the Riesz kernels~\cite[pp.~118]{stein}
to get to the second line from the first.
Note that the terms in squared brackets can be thought of as non-local
(averaged) strains, which are acted upon by the local stiffness tensor \(C\).
The non-local strains can be non-zero even outside of \(\Omega\)
thereby necessitating the integration over \(\Rn\) in the last term.
Perhaps the best way of thinking about this formula is that we consider
deformations of an infinite non-locally elastic body with the stiffness tensor
\(C\), while restricting the displacements to be zero outside of \(\Omega\).

The main reason for the derivation~\eqref{eq:heterog_same_homog} is that
the last term can be used as a new definition of the bilinear form
\(a(\cdot,\cdot)\) which remains symmetric even for spatially varying
material tensors \(C\):
\begin{equation}\label{eq:anew}
  a(u,v) = \int_{\Rn} C(x'')
  \bigg[\int_\O \tilde{A}_{\alpha/2}(|x''-x|)\varepsilon_u(x)\,\mathrm{d}x\bigg]:
  \bigg[\int_\O \tilde{A}_{\alpha/2}(|x''-x'|)\varepsilon_v(x')\,\mathrm{d}x'\bigg]
  \,\mathrm{d}x'',
\end{equation}
for \(u,v\in C^\infty_c(\O,\Rn)\) and \(\alpha\) satisfying the assumptions of
Proposition~\ref{prop:isomorph_Hs0}.
The questionable modelling part in the definition above is that if \(C\)
is only defined over \(\O\), it has to be arbitrarily extended onto \(\Rn\)
in such a way that the extension continues to be measurable and the
bounds~\eqref{eq:localcoercivity} continue to hold; for example one may
put \(C(x) = \underline{C}I\) for \(x\not\in\O\), where \(I\) is the identity
tensor.
On the bright side, with these definitions we easily generalize the desirable
mathematical properties established in the previous section to spatially
varying stiffness tensors.

\begin{proposition}\label{prop:a_bnd_coerc}
  Consider \(\alpha\) satisfying the assumptions of
  Theorem~\ref{thm:exist_Hs0}.
  The form \(a(\cdot,\cdot)\) defined by~\eqref{eq:anew} is bounded and coercive on
  \(H^s_0(\O;\Rn)\) with \(s = 1-\alpha/2\).
\end{proposition}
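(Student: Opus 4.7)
The plan is to reduce both boundedness and coercivity on $H^s_0(\O;\Rn)$ to the homogeneous case, already handled by Theorem~\ref{thm:L1_bnd_coerc} and Proposition~\ref{prop:isomorph_Hs0}, by sandwiching $C(x'')$ between $\underline{C}I$ and $\overline{C}I$ in~\eqref{eq:anew}. The key auxiliary form is
\[ Q(u,v) := \int_{\Rn} \varepsilon_u^{\text{nl}}(x''):\varepsilon_v^{\text{nl}}(x'')\,\mathrm{d}x'', \]
which is exactly~\eqref{eq:anew} with $C\equiv I$. First I would observe that running the semigroup manipulation in~\eqref{eq:heterog_same_homog} backwards (again with $C=I$) gives, for $u \in C^\infty_c(\O;\Rn)$, the identity
\[ Q(u,u) = \int_{\O}\int_{\O} \tilde{A}_\alpha(|x-x'|)\varepsilon_u(x):\varepsilon_u(x')\,\mathrm{d}x'\,\mathrm{d}x, \]
so that Theorem~\ref{thm:L1_bnd_coerc} applied to the Riesz kernel $A_\alpha$ with identity stiffness tensor yields the two-sided bound $\tfrac{1}{2}(u,u)_{A_\alpha} \le Q(u,u) \le (u,u)_{A_\alpha}$.

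Next I would integrate the pointwise inequalities $\underline{C}\,|\varepsilon_u^{\text{nl}}(x'')|^2 \le C(x'')\varepsilon_u^{\text{nl}}(x''):\varepsilon_u^{\text{nl}}(x'') \le \overline{C}\,|\varepsilon_u^{\text{nl}}(x'')|^2$, valid a.e.\ on $\Rn$ once $C$ is measurably extended outside $\O$ while preserving~\eqref{eq:localcoercivity}. This gives the chain
\[ \tfrac{1}{2}\underline{C}\,(u,u)_{A_\alpha} \le \underline{C}\,Q(u,u) \le a(u,u) \le \overline{C}\,Q(u,u) \le \overline{C}\,(u,u)_{A_\alpha}. \]
Passing from the diagonal estimate $a(u,u) \le \overline{C}(u,u)_{A_\alpha}$ to the off-diagonal boundedness $|a(u,v)| \le \overline{C}\|u\|_{A_\alpha}\|v\|_{A_\alpha}$ is immediate from Cauchy--Schwarz in the $C(x'')$-weighted Euclidean inner product, which is legitimate because $C(x'')$ is pointwise symmetric positive definite; this yields $|a(u,v)|^2 \le a(u,u)\,a(v,v)$. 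Invoking Proposition~\ref{prop:isomorph_Hs0} to replace $\|\cdot\|_{A_\alpha}$ by the equivalent $\|\cdot\|_{H^s_0(\O;\Rn)}$-norm delivers both coercivity and boundedness for $u,v \in C^\infty_c(\O;\Rn)$, and these inequalities extend to all of $V_{A_\alpha} = H^s_0(\O;\Rn)$ by density, since $u \mapsto \varepsilon_u^{\text{nl}}$ is continuous from $V_{A_\alpha}$ into $L^2(\Rn;\Sn)$ as a direct consequence of the identity for $Q$.

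The point requiring the most care, rather than a genuine obstacle, is that $\varepsilon_u^{\text{nl}}$ has full support on $\Rn$ even when $u$ is compactly supported in $\O$, so~\eqref{eq:anew} is sensitive to the extension of $C$ outside of $\O$; the sandwich argument however depends only on the preservation of the uniform bounds~\eqref{eq:localcoercivity} under this extension, which can always be arranged, for example by setting $C(x'') = \underline{C}I$ on $\Rn\setminus\O$ as suggested in the text, and the final coercivity and continuity constants depend only on $\underline{C}$, $\overline{C}$, and on the equivalence constants between $\|\cdot\|_{A_\alpha}$ and $\|\cdot\|_{H^s_0(\O;\Rn)}$ from Proposition~\ref{prop:isomorph_Hs0}.
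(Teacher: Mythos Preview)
Your proposal is correct and follows essentially the same approach as the paper: sandwich $C(x'')$ between $\underline{C}I$ and $\overline{C}I$, collapse the resulting homogeneous form via the semigroup identity~\eqref{eq:heterog_same_homog}, invoke the coercivity/boundedness in $V_{A_\alpha}$ already established for the Riesz kernel, use Cauchy--Schwarz on the positive definite form $a(\cdot,\cdot)$ for the off-diagonal bound, and finish by density and Proposition~\ref{prop:isomorph_Hs0}. The only cosmetic difference is that the paper obtains the upper bound $Q(u,u)\le (u,u)_{A_\alpha}$ directly via the pointwise orthogonality of symmetric and skew-symmetric parts of the nonlocal gradient, whereas you obtain it by citing Theorem~\ref{thm:L1_bnd_coerc} with $C=I$; these are the same inequality reached by the same mechanism.
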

\begin{proof}
  Owing to the density of \(C^\infty_c(\O;\Rn)\) in \(H^s_0(\O;\Rn)\) it is sufficient
  to prove the claim for smooth functions with compact support.
  For an arbitrary \(u \in C^\infty_c(\O;\Rn)\) we have the string of inequalities:
  \begin{equation*}
    \begin{aligned}
      a(u,u)
      &\geq
      \underline{C}\int_\Rn
      \bigg[\int_\O \tilde{A}_{\alpha/2}(|x''-x|)\varepsilon_u(x)\,\mathrm{d}x\bigg]:
      \bigg[\int_\O \tilde{A}_{\alpha/2}(|x''-x'|)\varepsilon_u(x')\,\mathrm{d}x'\bigg]
      \,\mathrm{d}x''
      \\&=
      \underline{C}\int_\O\int_\O
      \tilde{A}_{\alpha}(|x-x'|)\varepsilon_u(x):\varepsilon_u(x')
      \,\mathrm{d}x\,\mathrm{d}x'
      \geq
      \frac{1}{2}\underline{C}(u,u)_A,
    \end{aligned}
  \end{equation*}
  where last line is derived from the first exactly as in~\eqref{eq:heterog_same_homog},
  and the two inequalities are owing to~\eqref{eq:localcoercivity}
  and Theorem~\ref{thm:L1_bnd_coerc}.
  Consequently, \(a(\cdot,\cdot)\) defines an inner product on \(C^\infty_c(\O;\Rn)\),
  and therefore as in the proof of Proposition~\ref{prop:VAbound} it is sufficient
  to bound \(a(u,u)\) in terms of \((u,u)_A\).
  Such a bound is established in the following string of inequalities:
  \begin{equation*}
    \begin{aligned}
      a(u,u)
      &\leq
      \overline{C}\int_\Rn
      \bigg[\int_\O \tilde{A}_{\alpha/2}(|x''-x|)\varepsilon_u(x)\,\mathrm{d}x\bigg]:
      \bigg[\int_\O \tilde{A}_{\alpha/2}(|x''-x'|)\varepsilon_u(x')\,\mathrm{d}x'\bigg]
      \,\mathrm{d}x''
      \\
      &\leq
      \overline{C}\int_\Rn
      \bigg[\int_\O \tilde{A}_{\alpha/2}(|x''-x|)\nabla u(x)\,\mathrm{d}x\bigg]:
      \bigg[\int_\O \tilde{A}_{\alpha/2}(|x''-x'|)\nabla u(x')\,\mathrm{d}x'\bigg]
      \,\mathrm{d}x''
      \\&=
      \overline{C}\int_\O\int_\O
      \tilde{A}_{\alpha}(|x-x'|)\nabla u(x):\nabla u(x')
      \,\mathrm{d}x\,\mathrm{d}x'
      =
      \overline{C}(u,u)_A,
    \end{aligned}
  \end{equation*}
  where in addition to the previously used arguments we utilize the orthogonality
  between symmetric and skew-symmetric second order tensors.
  Finally, the claim follows from Proposition~\ref{prop:isomorph_Hs0}.
\end{proof}

Therefore, we have all the necessary ingredients for applying
Theorem~\ref{thm:LM}, and are in position to state the following existence and
uniqueness result, which in view of~\eqref{eq:heterog_same_homog} generalizes
Theorem~\ref{thm:exist_Hs0}.

\begin{theorem}\label{thm:exist_Hs0_heterog}
    For \(\alpha\) and \(s\) as in Theorem~\ref{thm:exist_Hs0},
    the variational problem~\eqref{wf} with \(a(\cdot,\cdot)\) defined
    by~\eqref{eq:anew} and homogeneous Dirichlet boundary conditions
    (that is, \(\Gamma_D=\partial\O\) and \(\bar u=0\)) admits a unique solution
    in \(H^s_0(\O;\Rn)\).
\end{theorem}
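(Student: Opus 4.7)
The plan is to simply invoke the Lax--Milgram lemma (Theorem~\ref{thm:LM}) on the Hilbert space $H^s_0(\Omega;\R^n)$, since by Proposition~\ref{prop:isomorph_Hs0} this is the natural energy space also for the new heterogeneous bilinear form. Because the Dirichlet data are taken to be homogeneous, there is no Dirichlet lift to worry about and $V = H^s_0(\Omega;\R^n)$ directly.

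To apply the lemma I would verify its three hypotheses in turn. First, boundedness and coercivity of the bilinear form $a(\cdot,\cdot)$ defined by~\eqref{eq:anew} on $H^s_0(\Omega;\R^n)$: this is exactly Proposition~\ref{prop:a_bnd_coerc}, which already upgrades the homogeneous estimates from Theorem~\ref{thm:L1_bnd_coerc} to the heterogeneous setting by sandwiching pointwise with $\underline{C}$ and $\overline{C}$ and using the Riesz semigroup identity~\eqref{eq:heterog_same_homog}. Second, continuity of the right-hand side functional $\ell(v) = \int_\Omega f\cdot v\,\mathrm{d}x$ on $H^s_0(\Omega;\R^n)$: for $f \in L^2(\Omega;\R^n)$ this follows from Cauchy--Schwarz together with the trivial embedding $\|v\|_{L^2(\Omega;\R^n)} \le \|v\|_{H^s(\Omega;\R^n)}$ built into the fractional Sobolev norm, exactly as in the proof of Theorem~\ref{thm:exist_Hs0}.

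With all three hypotheses in hand, Theorem~\ref{thm:LM} produces a unique $u \in H^s_0(\Omega;\R^n)$ solving $a(u,v) = \ell(v)$ for every $v \in H^s_0(\Omega;\R^n)$, which is the claim. There is no genuine obstacle here: all the analytic work (the Riesz semigroup trick, non-local Korn's inequality, and the identification $V_A = H^s_0(\Omega;\R^n)$) has already been carried out in Section~\ref{subsec:Riesz} and Proposition~\ref{prop:a_bnd_coerc}. The proof therefore reduces to citing these results and Lax--Milgram.
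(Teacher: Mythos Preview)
Your proposal is correct and matches the paper's approach exactly: the paper does not even supply a separate proof environment for this theorem, instead remarking just before the statement that ``we have all the necessary ingredients for applying Theorem~\ref{thm:LM}'', namely Proposition~\ref{prop:a_bnd_coerc} for boundedness and coercivity and the argument of Theorem~\ref{thm:exist_Hs0} for continuity of \(\ell\). Your write-up is essentially what the paper leaves implicit.
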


We would like to remark that the model we propose aims to keep an explicit expression
of the integral kernel, as we find this interesting from the point of view of numerics
and mechanical applications.
Yet another option for extending the model to the heterogeneous case is to
take the \(s\)-power, in the functional analytic sense, of the heterogeneous local problem
operator as has been done for the scalar fractional elliptic equation definition in~\cite{stinga}.
In this case the bilinear form admits a representation as a nonlocal integral,
whose kernel can be estimated, plus a local bilinear form, the corrector term.

Another interesting question, which we cannot answer presently, is whether one
can reduce the domain of integration for the outer integral (that is, the integral
over non-local strains with respect to \(\mathrm{d}x''\)) in~\eqref{eq:anew}
to \(\O\) and
still maintain coercivity in a suitable function space, such as \(H^s_0(\Omega;\Rn)\)?
(Boundedness of the bilinear form obtained in this fashion with respect to
\(H^s(\Omega;\Rn)\) norm, \(s=1-\alpha/2\) is quite straightforward.)
Intuitively a positive answer to this question seems plausible, as we still include
the singularities of the kernel into the integration domain.
Additionally, we have repeated the numerical experiment, which resulted in Figure~\ref{fig:4}
but with such a modification of the bilinear form~\eqref{eq:anew}.
The results are shown in Figure~\ref{fig:7}, which qualitatively agree very well
with those shown in Figure~\ref{fig:4}.

\begin{figure}
  \centering
  \includegraphics[width=0.45\columnwidth]{./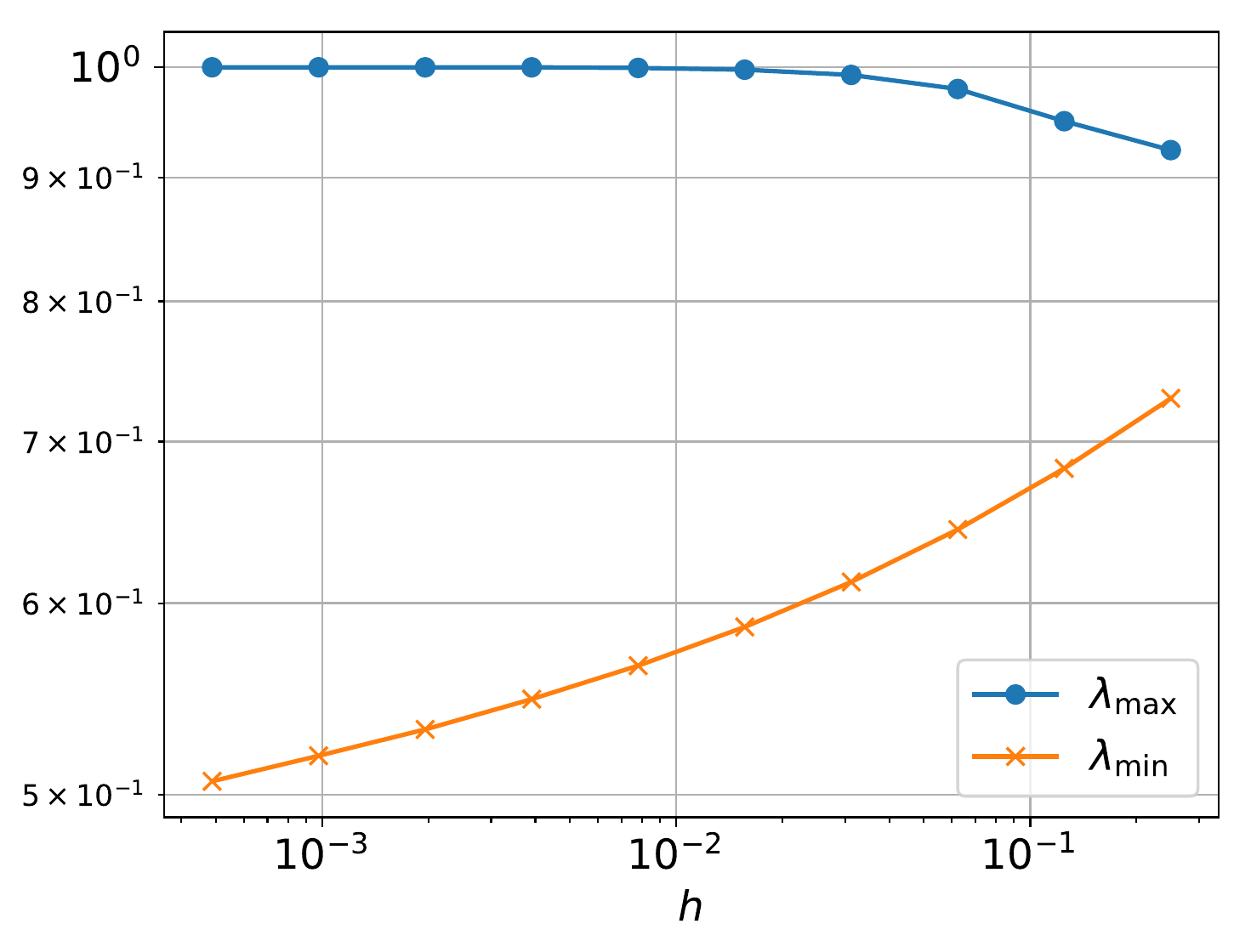}
  \caption{Behaviour of the generalized eigenvalues \(\lambda_{\min}\),
  and \(\lambda_{\max}\) numerically characterizing the coercivity and
  boundedness of the modification of the bilinear form~\eqref{fig:7}
  with respect to \(H^s(\O;\Rn)\)-norm.
  The case of \(\alpha=s=2/3\) is shown.}
  \label{fig:7}
\end{figure}

\section*{Acknowledgements}
A.E.'s research at DTU is funded by the Villum Fonden through
the Villum Investigator Project InnoTop.
The work of J.C.B. is funded by FEDER EU and Ministerio de Econom\'ia y
Competitividad (Spain) through grant MTM2017-83740-P.
J.C.B. acknowledges an interesting discussion on the subject of this paper with
Carlos Mora-Corral.

%\section*{References}
\bibliography{EringenExistence}

\bigskip
{\small\noindent
Anton Evgrafov: Department of Mathematical Sciences, \\
Norwegian University of Science and Technology (NTNU), N--7491 Trondheim,
Norway \\ 
and\\
Department of Mechanical Engineering, Technical University of Denmark,\\
Akademivej, Building 358, DK--2800 Kgs.\ Lyngby, Denmark. \\
{\tt aaev@mek.dtu.dk}

\bigskip\noindent
Jos\'e C. Bellido: E.T.S.I. Industriales, Department of Mathematics,\\
University of Castilla-La Mancha, 13.071-Ciudad Real, Spain.\\
{\tt josecarlos.bellido@uclm.es}
}

\end{document}